\documentclass[12pt]{article}
\usepackage{amsmath,amsthm,amscd,amsfonts,amssymb, mathrsfs}
\usepackage{latexsym}
\usepackage[usenames]{color}
\newcommand{\CC}{\mathbb C}

\newcommand{\NN}{\mathbb N}

\newcommand{\RR}{\mathbb R}

\newcommand{\ZZ}{\mathbb Z}
\newcommand{\EE}{\mathbb E}

\newcommand{\dd}{\mathcal D}

\newtheorem{thm}{Theorem}[section]
\newtheorem{lemma}[thm]{Lemma}

\def\half{\frac{1}{2}}

\def\pf{{\noindent \bf Proof: }}
\DeclareMathOperator*{\supess}{ess\,sup}
\def\etan{{\mathcal N}}

\usepackage[T1]{fontenc}
\usepackage{graphicx,graphics}
\usepackage{relsize}
\usepackage[all]{xy}
\usepackage{tikz}
\usepackage{color}
\textwidth=400pt
\allowdisplaybreaks

\newcommand{\GG}{\mathbb{G}}
\newcommand{\Hi}{\mathscr{H}}
\newcommand{\Ex}{\mathop{\mathbb{E}}}

\newcommand{\norm}[1]{\left\lVert #1\right\rVert}

\newtheorem{theorem}{Theorem}[section]

\newtheorem{remark}[theorem]{Remark}
\numberwithin{equation}{section}

\newtheorem{hypothesis}[theorem]{Hypothesis}

\date{\small \today}

\begin{document}
\setlength{\belowdisplayskip}{2pt} \setlength{\belowdisplayshortskip}{2pt}
\setlength{\abovedisplayskip}{2pt} \setlength{\abovedisplayshortskip}{2pt}

\vspace{-2cm}
\begin{center}
{\large Regularity of the density of states of Random Schr\"odinger Operators}

\vspace{.5cm}

\small Dhriti Ranjan Dolai\\
\small Department of Mathematics \\
\small Indian Institute of Technology, Dharwad \\
\small Dharwad-.580011, India\\
\small email:  dhriti@iitdh.ac.in \\
\vspace{.5cm}
\small M Krishna \\
\small Ashoka University, Plot No 2, Rajiv Gandhi Education City\\
\small Rai, Haryana 131029, India \\
\small email: krishna.maddaly@ashoka.edu.in\\
\vspace{.5cm}
\small Anish Mallick\\
\small Department of Mathematics \\
\small Pontificia Universidad Cat\'{o}lica de Chile. \\
\small Vicuna Mackenna 4860, Macul , Santiago, Chile\\
\small email: anish.mallick@mat.uc.cl


\abstract{\small In this paper we solve a long standing open problem
for Random Schr\"odinger operators on $L^2(\RR^d)$ with i.i.d single site
random potentials. We allow a large class of free operators, including
magnetic potential, however our method of proof works only for the
case when the random potentials satisfy a complete covering condition.
We require that the supports of the random potentials cover $\RR^d$
and the bump functions that appear in the random potentials form
a partition of unity.
For such models, we show that the Density of States (DOS)
 is $m$ times differentiable in the part of the spectrum where exponential
localization is valid, if the single site distribution
has compact support and has H\"older continuous $m+1$ st derivative.
The required H\"older continuity
depends on the fractional moment bounds satisfied by appropriate operator
kernels. Our proof of the Random Schr\"odinger operator case 
is an extensions of our proof 
for  Anderson type models
on $\ell^2(\GG)$, $\GG$ a countable set, with the property that
the cardinality of the set of points at distance $N$ from any fixed
point grows at some rate in $N^\alpha, \alpha >0$. This condition rules out
the Bethe lattice, where our method of proof works but the degree
of smoothness also depends on the localization length,  
a result we do not present 
here. Even for these models the random
potentials need to satisfy a complete covering condition. The 
Anderson model on the
lattice for which regularity results were known earlier
also satisfies the complete covering condition. }

\end{center}

\section{Introduction}
In the study of the Anderson Model and  Random Schr\"odinger
operators,  modulus of continuity of the
Integrated Density of States (IDS) is well understood, (see Kirsch-Metzger \cite{MR2307751} for a comprehensive review).
In dimension bigger than one,  there are very few results  on further 
smoothness of the IDS, even
when the single site distribution is assumed to have more smoothness, except
for the case of the Anderson model itself at high disorder, (see
for example Campanino-Klein \cite{MR836001}, Bovier-Campanino-Klein-Perez \cite{MR929139}, Klein-Speis \cite{MR1033921}, Simon-Taylor \cite{MR814540}).

In this paper we will show, in Theorems \ref{thm:smoothnessThmDiscreteCase} and \ref{thm:smoothnessThmContCase},  that the IDS is almost
as smooth as the single site distribution
for a large class of continuous and discrete random operators.  These are
\begin{equation}\label{cont} 
H^\omega = H_0 + \sum_{n \in \ZZ^d} \omega_n u_n,
\end{equation}
on $L^2(\RR^d)$ and
\begin{equation}\label{disc} 
h^\omega = h_0 + \sum_{n \in \GG} \omega_n P_n,
\end{equation}
on the  separable Hilbert space $\Hi$ and a countable set $\GG$.
The operator $h_0$ is a bounded
self-adjoint operator and the $\{P_n\}$  are finite rank projection.
We specify the conditions
on $H_0, h_0, u_n, P_n$ and $\omega_n$ in the following sections.

The IDS, denoted $\etan(E)$, is the distribution function of a non-random
measure obtained as the
weak limit of a sequence of random atomic
measures.  The proof of the existence of such limits for various models
of random operators has  a long history.
These results are well documented in the books of
Carmona-Lacroix \cite{MR1102675}, Figotin-Pastur \cite{MR1223779}, Cycon et.al. \cite{MR883643}, Kirsch \cite{MR2509110}
and the reviews of  Kirsch-Metzger
\cite{MR2307751}, Veseli\'{c} \cite{MR2378428} and in a review
for stochastic Jacobi matrices by Simon \cite{MR901051}.
In terms of the projection
valued spectral measures $E_{H^\omega}, E_{h^\omega}$ associated with
the self-adjoint operators $H^\omega, h^\omega$, the function
$\etan(E)$ has an explicit expression, for the cases when $h^\omega$, $H^\omega$ are ergodic.  
For the model (\ref{cont})  it is given as
$$
\frac{1}{\int u_0(x) dx} \EE \Bigl[ tr\Bigl(u_0 E_{H^\omega}((-\infty, E])\Bigr)\Bigr]
$$
and for the model (\ref{disc}) it turns out to be 
$$
 \frac{1}{tr(P_0)} \EE \bigg[ tr\bigg(P_0 E_{h^\omega}  ((-\infty, E])\bigg) \bigg].
$$

We note that by using the same symbol $\etan$ for two different models,
we are abusing notation but this abuse will not cause any 
confusion as the contexts are
clearly separated to different sections. 
The first of these expressions for the IDS 
is often called the {\it Pastur-Shubin trace formula}.

In the case of the model (\ref{cont}) in dimensions $d \geq 2$,
there are no results in the literature on the smoothness
of $\etan(E)$, our results are the first to show even continuity of the
density of states (DOS), which is the derivative of $\etan$ almost 
every $E$.
The results of 
Bovier et.al. in \cite{MR929139}
are quite strong for the Anderson model at large disorder
and it is not clear that their proof
using supersymmetry extends to other discrete random operators.

In the one dimensional Anderson model, Simon-Taylor \cite{MR814540} showed that
$\etan(E)$ is $C^\infty$ when the single site distribution (SSD) is
compactly supported and is H\"older continuous.
Subsequently,  Campanino-Klein \cite{MR836001} proved
that $\etan(E)$ has the same degree of smoothness as the SSD.
In the one dimensional strip, smoothness results were shown by
 Speis \cite{MR1186042, MR2636152}, Klein-Speis \cite{MR1033921, MR952881},
Klein-LaCroix-Speis \cite{MR1031404},  Glaffig \cite{MR1069256}.
For some non-stationary random potentials on the lattice, Krishna
\cite{MR1894550} proved smoothness for an averaged total spectral measure.

There are several results showing $\etan(E)$ is analytic
for the Anderson model on $\ell^2(\ZZ^d)$. 
Constantinescu-Fr\"{o}hlich-Spencer \cite{MR748803}
showed analyticity of $\etan(E)$ when SSD is analytic.
The result of Carmona \cite[Corollary VI.3.2]{MR1102675}) improved
the condition on SSD to requiring fast exponential decay to get analyticity.
In the case of the Anderson model over $\ell^2(\ZZ^d)$  at large disorder the
results of Bovier et.al. \cite{MR929139} give smoothness
of $\etan(E)$ when the Fourier transform $h(t)$ of the SSD is $C^\infty$ and
the derivatives decay like $1/t^\alpha$ for some $\alpha > 1$ at infinity.
They also give variants of these, in particular if the SSD is $C^{n+d}$ then
$\etan(E)$ is $C^n$ under mild conditions on its decay at $\infty$. They also
obtain some analyticity results.
Acosta-Klein \cite{MR1184777} show that $\etan(E)$ is analytic on the Bethe lattice for SSD close to the Cauchy distribution.
While all these results are valid in the entire spectrum,
Kaminaga et. al. \cite{MR2992798} showed local analyticity
of $\etan(E)$  when the SSD has an analytic component in an interval allowing
for singular parts elsewhere, in particular for the uniform distribution.
Analyticity results obtained by March-Sznitman \cite{MR879550} were similar
to those of Campanino-Klein \cite{MR836001}.

In all the above models, only when $E$ varies in the pure point spectrum 
that regularity of $\etan(E)$ beyond Lipshitz continuity is shown.  
This condition that $E$ has to be in the pure point spectrum may not
have been explicitly stated, but it turns out to be a consequence
of the assumptions on disorder or assumptions on the dimension 
in which the models were
considered.  For the Cauchy distribution in the Anderson model on
$\ell^2(\ZZ^d)$, Carmona-Lacroix
\cite{MR1102675} have a theorem showing analyticity in the entire spectrum.
However, absence of pure point spectrum is only a conjecture in these
models as of now. At the time of revision of this paper one of us
Krishna \cite{krishna} could show that in the Anderson model on the 
Bethe lattice analyticity of the density of states with Cauchy distribution
is valid at all disorders as part of a more general result. This result
in particular exhibits regularity of the density of states through the
mobility edge in the Bethe lattice case.

In the case of random band matrices, with
the random variables following a Gaussian distribution, Disertori-Lager \cite{MR3665217},
Disertori \cite{MR2114358, MR2013666}, Disertori et. al. \cite{MR1942858} have smoothness results for an appropriately defined density
of states.   Recently Chulaevsky \cite{Chu} proved infinite
smoothness for non-local random interactions.

For the one dimensional ergodic random operators IDS was shown to be
 log H\"older continuous by Craig-Simon \cite{MR714434}.  There
are numerous results giving the modulus of continuity of $\etan(E)$,
one of which is the Wegner \cite{wegner1981bounds} estimate, for independent random potential,
showing its Lipschitz continuity.
Combes et.al. in \cite{MR2362242} showed that for Random Scr\"odinger
operators with independent random potentials, the modulus of continuity
of $\etan(E)$ is the same as that of the SSD.  
For non i.i.d potentials in higher dimensions there are some results
on modulus of continuity for example that of Schlag \cite{MR1860759} showing
and by Bourgain-Klein \cite{MR3103255} who show log H\"older continuity for
the distribution functions of outer measures for a large class
of random and non-random Schr\"odinger operators. 
We refer to these papers for more recent results on the continuity
of $\etan(E)$ not given in the books cited earlier.

The idea of proof of our Theorems is the following.  Suppose
we have a self-adjoint matrix $A^\omega$ of size $N$ with i.i.d real valued
random variables $\{\omega_1, \dots, \omega_N\}$ on the diagonal
with each $\omega_j$ following the distribution $\rho(x)dx$.
Then the average of the matrix elements of the resolvent of $A^\omega$
are given by
$$
f(z) = \int (A^\omega - zI)^{-1}(i, i) ~ \prod_{k=1}^N \rho(\omega_k)d\omega_k,
$$
for any $z \in \CC^+$.
We take $z = E+i\epsilon, ~ \epsilon >0$, then we see that 
from the definitions, the function $(A^\omega - zI)^{-1}(i, i)$
can be written as a function of $\vec{\omega} - E\vec{1}$ and $\epsilon$,
namely,
\begin{eqnarray*}
&F(\vec{\omega} - E \vec{1}, \epsilon) = (A^\omega - zI)^{-1}(i, i), ~
\Phi(\vec{\omega}) = \prod_{i=1}^N\rho(\omega_i) \\
&\vec{\omega} = (\omega_1, \omega_2, \dots, \omega_N), ~ \vec{1} = (1, 1, \dots, 1).
\end{eqnarray*}
Then it is clear
that with $*$ denoting convolution of functions on $\RR^N$ and
 setting $\tilde{F_\epsilon}(x) = F(-x, \epsilon)$,
$$
\EE\left((A^\omega - zI)^{-1}(i, i) \right) = (\tilde{F_\epsilon} * \Phi)(E \vec{1}).
$$
Since convolutions are smoothing, we get the required smoothness as a
function of $E$ if one of the components $\tilde{F_\epsilon}$ or $\Phi$
is smooth on $\RR^N$.  Since we are assuming that each $\rho$ has a degree
of smoothness, which passes on to $\Phi$, we get a smoothness result for 
operators with finitely many random variables having the above form. 

Let us remark here that it is in this step, which is crucial for 
further analysis, 
that we need a {\it complete covering condition}, even for
finite dimensional compressions of our random operators be they continuous
or discrete.

If we were to replace $A^\omega$ by an operator with infinitely
many random variables $\omega_i$,
we would encounter the problem of concluding smoothing properties
of "convolutions of" functions of infinitely many variables. 
This is an important difficulty that needs to be solved.

One of the interesting aspects of the operator $H^\omega$ (or $h^\omega$)
 we are dealing with is that
there is a sequence of operators (denoted by $A^\omega_k$),
containing finitely many random variables $\omega_i$,  which
converges to $H^\omega$ (or $h^\omega$)  in strong resolvent sense.
Hence we can write the limit as a telescoping sum, namely,
$$
(A^\omega - z)^{-1}(i,i)  = (A_1^\omega - z)^{-1}(i,i) + \sum_{k=1}^\infty \Bigl[ (A_{k+1}^\omega - z)^{-1}(i,i) - (A_k^\omega - z)^{-1}(i,i) \Bigr].
$$
Since the operators appearing in the summands all contain finitely many 
$\omega_i$ their averages over the random variables can be written 
as convolutions of functions of finitely many variables $\omega_i$. 
Then, most of the work in the proof is to show that quantities of the form
$$\left|\int  \bigl[ (A_{k+1}^\omega - z)^{-1}(i,i) - (A_k^\omega - z)^{-1}(i,i) \bigr] \left(\sum_{j=1}^{N_{k+1}}\frac{\partial}{\partial \omega_j}\right)^l \prod_{n=1}^{N_{k+1}} \rho(\omega_n)d\omega_n\right|$$
with $N_k$ growing at most as a fixed polynomial in $k$, are summable in $k$.
This is the part where we use the fact that we are working in
the localized regime, where it is possible to show that
they are exponentially small in $k$. 

For the discrete case the procedure is relatively straight forward
and there are no
major technical difficulties to overcome, but in the continuous case, the
infinite rank perturbations pose a problem, since the trace of the Borel-Stieltjes
transform of the average spectral measures do not converge.
We overcome this problem by renormalizing this transform appropriately.
For our estimates to work, we have to use
fractional moment bounds and also uniform bounds on the integrals of
resolvents.  Both of these are achieved because we have dissipative operators
(up to a constant) whose resolvents can be written in terms of integrals of contraction
semigroups.

As stated above, our proof is in the localized regime.
The Anderson model 
was formulated by Anderson \cite{and} who argued that there is no
diffusion in these models for high disorder or at low energies.  
The corresponding spectral statement is that there is only pure point
spectrum or only localization for these cases.
In the one dimensional
systems, where the results are disorder independent,
 localization was shown rigorously by Goldsheid et.al. \cite{MR0470515}
for random Schr\"odinger operators and by Kunz-Souillard \cite{MR597748}
for the Anderson model.  For higher dimensional Anderson model the
localization was proved simultaneously by
Fr\"{o}hlich et.al. \cite{MR814541},
Simon-Wolff \cite{MR820340}, Delyon et.al. \cite{MR806247}
based on exponential decay shown by Fr\"{o}hlich-Spencer \cite{MR839666}
who introduced a tool called multi scale analysis in the discrete case.
 A simpler proof
based on exponential decay of fractional moments was later given by
Aizenman-Molchanov \cite{MR1244867}.  There are numerous improvements
and extensions of localization results beyond these papers.

In the case of continuous models, Combes-Hislop \cite{MR2362242,CH3}, Klopp \cite{K3}, Germinet-Klein \cite{GK4},
Combes-Hislop-Tip \cite{CHT1}, Bourgain-Kenig \cite{B1} and Germinet-De Bievre \cite{germinet1998dynamical}
provided proof of localization for different types of models.
The fractional moment method was first extended to the continuous case 
in Aizenman et.al. in \cite{MR2207021}
and later improved by Boutet de Monvel et.al. \cite{de2006localization}.

We refer to Stollmann \cite{MR1935594} for the numerous advances that
followed on localization. 

The rest of the article is divided into three parts. Section 2 has  all
the preliminary results, which will be used significantly for both
the discrete and the continuous case.
Section 3 will deal with the discrete case, where we use a method of proof
which will be reused for the continuous case.
The main result of Section 3 is Theorem \ref{thm:smoothnessThmDiscreteCase}
 which in the case of Anderson tight binding model would prove the regularity
of density of states.
Finally in Section 4 we will deal with the random Schr\"odinger operators
 and the main result there is Theorem \ref{thm:smoothnessThmContCase}.

\section{Some Preliminary Results}

In this section we present some general results that are at the heart
of the proofs of our theorems. 
These are Theorem \ref{thm:finitesmooth} and
 Theorem \ref{thm:remarkable}.  The later theorem, stated for functions, gives
a bound of the form 
$$\left| \int \left(\frac{1}{x-w} - \frac{1}{x -z}\right)f(x) dx\right| \leq C_{f,s} |z-w|^s$$
for certain family of $f$.
For operators, we need more  work and need more uniformity for $f$.

The first theorem is quite general and is about random perturbations of
self-adjoint operators and their smoothing properties of complex
valued functions of the operators.  

\begin{thm}\label{thm:finitesmooth}
Consider a self-adjoint operator $A$ on a separable Hilbert space $\Hi$ 
and let  $\{T_n\}_{n=1}^N, N < \infty$ be bounded positive operators
such that $\sum_{n=1}^N T_n = I$, where $I$ denotes the identity operator on $\Hi$.
Suppose $\{\omega_n, n=1, \dots, N\}$ are independent real valued random variables distributed according to
$\rho_n(x) dx$ and consider the random operators $A^\omega = A + \sum_{n=1}^N \omega_n T_n$.  If $f$ is a complex valued function on the set of linear operators
on $\Hi$, such that $f(A^\omega - E I)$ is a bounded measurable function
of $(\omega_1, \dots, \omega_n, E)$, then 
$
h(E) = \EE\big[ f(A^\omega - EI)\big] $ satisfies 
$h \in C^m(\RR) , ~~ \mathrm{if} ~~ \rho_n \in C^m(\RR)$ and $\rho_n^{(k)} \in L^1(\RR), ~n=1,2,\cdots,N$ and $ 0\leq k\leq m$.
\end{thm}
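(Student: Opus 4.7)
The plan is to exploit the partition-of-unity condition $\sum_{n=1}^N T_n = I$ to move the $E$ dependence off the operator and onto the densities $\rho_n$, after which smoothness in $E$ follows from smoothness of the $\rho_n$ by differentiation under the integral.

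First I would write
$$A^\omega - EI \;=\; A + \sum_{n=1}^N \omega_n T_n - E\sum_{n=1}^N T_n \;=\; A + \sum_{n=1}^N (\omega_n - E)\, T_n,$$
so that $f(A^\omega - EI) = F(\omega_1 - E,\dots,\omega_N - E)$ where $F(\xi_1,\dots,\xi_N) := f\bigl(A + \sum_n \xi_n T_n\bigr)$ is a bounded measurable function on $\RR^N$ by hypothesis. Consequently
$$h(E) \;=\; \int_{\RR^N} F(\omega_1 - E,\dots,\omega_N - E)\prod_{n=1}^N \rho_n(\omega_n)\, d\omega_n \;=\; \int_{\RR^N} F(\omega)\prod_{n=1}^N \rho_n(\omega_n + E)\, d\omega_n,$$
the second equality being the change of variables $\omega_n \mapsto \omega_n + E$. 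This is the crucial reduction: the $E$ variable now appears only inside the smooth factors $\rho_n(\omega_n + E)$.

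Next, I would differentiate under the integral sign iteratively up to order $m$. A single derivative $\partial/\partial E$ applied formally to the integrand produces $F(\omega)\sum_{k=1}^N \rho_k'(\omega_k + E)\prod_{n\neq k}\rho_n(\omega_n + E)$, and after $l \le m$ derivatives one obtains a finite sum of terms of the form $F(\omega)\prod_n \rho_n^{(k_n)}(\omega_n + E)$ with $\sum_n k_n = l$. To justify interchange, observe that $F$ is bounded (say by $M$), so each such term is dominated in absolute value by $M\prod_n |\rho_n^{(k_n)}(\omega_n + E)|$, which is integrable with $L^1$-norm $M\prod_n \|\rho_n^{(k_n)}\|_{L^1}$, independently of $E$ by translation invariance. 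Standard dominated-convergence arguments then justify the differentiation and give continuity of the $l$-th derivative in $E$; the hypothesis $\rho_n^{(k)} \in L^1$ for $0 \le k \le m$ is precisely what is needed.

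There is no serious obstacle here once the reformulation is in hand; the only point requiring care is the uniform-in-$E$ integrable dominating function at each order of differentiation, handled as above. I would note that the identity $\sum_n T_n = I$ (the complete covering condition) is essential, as without it the $E$-dependence cannot be absorbed into a translation of the random variables and the argument breaks down.
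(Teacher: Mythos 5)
Your proof is correct and follows essentially the same route as the paper: exploit $\sum_n T_n = I$ to write $f(A^\omega - EI)$ as a function of $\vec{\omega} - E\vec{1}$, recognize the expectation as a convolution on $\RR^N$ (equivalently, shift $E$ onto the densities by change of variables), and differentiate under the integral. You spell out the dominated-convergence justification that the paper leaves to the reader, but the decomposition and the key idea are identical.
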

\pf Using the conditions on $\{T_n\}$ we see that $A^\omega - EI = 
A + \sum_{n=1}^N (\omega_n - E) T_n$.  Thus 
$ f(A^\omega - E I)$ is a bounded measurable function of the variables
$(\omega_1 - E, \omega_2 - E, \dots, \omega_n -E)$, which is a point
$\vec{\omega} - E \vec{1}$ in $\RR^N$, where $\vec{1} = (1, \dots, 1)$,
we write $F(\vec{\omega}- E\vec{1}) = f(A^\omega - EI)$. 
Then the expectation can be written as 
$$
\EE [f(A^\omega - EI)] = \int_{\RR^N} F(\vec{\omega} - E \vec{1}) \Phi(\vec{\omega})d\vec{\omega} 
 = \int_{\RR^N} F(-( E \vec{1} -\vec{\omega})) \Phi(\vec{\omega})d\vec{\omega}, 
$$
where we set $\Phi(\vec{\omega}) = \prod_{n=1}^N \rho_n(\omega_n)$.  Writing
now $g(\vec{x}) = F(-\vec{x})$ we see that 
$$
\EE [f(A^\omega - EI)] = (g*\Phi)(E\vec{1}),
$$
where $*$ denotes convolution in $\RR^N$. The result now follows easily 
from the properties of convolution of functions on $\RR^N$.

\qed

For later use we note that if $\nabla$ denotes the gradient operator
on differentiable functions on $\RR^N$ and ${\bf D}$ denotes
${\bf D}\Phi = \nabla \Phi \cdot \vec{1} = \sum_{j=1}^N \frac{\partial}{\partial x_i}\Phi$, then an integration by parts yields
\begin{equation}\label{nabalform}
\frac{d^\ell }{dE^\ell}h(E)=\frac{d^\ell}{dE^\ell}( g*\Phi)(E \vec{1}) =  (g * ({\bf D}^\ell \Phi))(E\vec{1}).
\end{equation}

\begin{remark}\label{completecovering} 
This theorem clarifies why the complete covering condition
is needed in main our results for the discrete and the continuous models.
The covering property is needed even
for obtaining smoothness of finitely many random perturbations of a
self-adjoint operator, while such a property is not needed for modulus
of continuity results.  We are unsure at the moment if this condition can be relaxed.
\end{remark}

Let $A, B $ be self-adjoint operators and let $F_1, F_2$ be 
bounded non-negative operators
on a separable Hilbert space $\Hi$.  For $X \in \{A, B\}, ~ z \in \CC^+$,
set,
$$
R(X, x, y, z) = (X+xF_1 + yF_2 -z)^{-1} 
$$
and 
$$
R(X, x, z) = (X+xF -z)^{-1},~~ F = F_1 + F_2
$$
for the following Theorem.  
For the rest of the paper by a {\it smooth
indicator function } on an interval $(a, b)$ we mean a smooth function which is 
one in $[c, d] \subset (a, b)$ which vanishes on $\RR \setminus (a, b)$ with
$a - c + b-d$ as small as one wishes.  

\begin{theorem}\label{thm:remarkable}
Suppose $A, B, F_1, F_2, F, z$ and $\Hi$ be as above.  Suppose
$\rho_1, \rho_2$ are  compactly supported functions 
on $\RR^+$ such that their derivatives are $\tau$-H\"older continuous
and their supports are contained in $(0, R)$.  Let $\chi_R$ denote a smooth 
indicator function of the set $(0, 2R+1)$ and let 
$\phi_R(x) = \chi_R(x+ \frac{5}{2}R +1)$.
Then for any $0 < s < \tau$ and some  constant $\Xi$  
(depending upon $\rho_1, \rho_2, s, \tau$ but independent of $z, A, F_1, F_2$),
\begin{enumerate}
\item
\begin{eqnarray}\label{eq:remarkable}
&\displaystyle{\norm{\int F^\half \bigg( R(A, x_1, x_2, z) - R(B, x_1, x_2, z) \bigg)F^\half \rho_1(x_1) \rho_2(x_2) dx_1 dx_2 }} \nonumber \\
&\leq \displaystyle \Xi {\int \norm{ F^\half \bigg(R(A, x_1, x_2, z) - R(B, x_1, x_2, z) \bigg)F^\half}^s}, \nonumber \\
& ~~~~~ \times \phi_R(x_1 )\phi_R(x_2) dx_1 dx_2 . 
\end{eqnarray}
\item  Specializing to the case when $F_1 = F_2, x_1 = x_2 = x/2$ we have
\begin{align}\label{eq:remarkable2}
&\displaystyle{\norm{\int F^\half \bigg( R(A, x, z) - R(B, x, z) \bigg)F^\half \rho_1(x) dx }} \\
&\leq \displaystyle  \Xi {\int \norm{ F^\half \bigg(R(A, x, z) - R(B, x, z) \bigg)F^\half}^s \phi_R\left(x \right) dx  }. \nonumber
\end{align}
\end{enumerate}

\end{theorem}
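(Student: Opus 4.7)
The plan is to reduce the operator inequality (\ref{eq:remarkable}) to a uniform scalar H\"older-type inequality by duality, and then to derive that scalar bound via an integration-by-parts argument that exploits the smoothness of $\rho_1, \rho_2$. Setting
$$
G(x_1, x_2) := F^\half \bigl[R(A, x_1, x_2, z) - R(B, x_1, x_2, z)\bigr] F^\half,
$$
the operator norm on the left of (\ref{eq:remarkable}) equals $\sup_{\|u\|=\|v\|=1}\bigl|\int\dprod{u}{G(x_1, x_2) v}\rho_1(x_1)\rho_2(x_2)\, dx_1 dx_2\bigr|$. Using $|\dprod{u}{G v}| \leq \|G\|$ to bridge the scalar modulus and the operator norm, the problem reduces to bounding, uniformly over unit vectors $u, v \in \Hi$, the scalar integral $\bigl|\int g_{u,v}(x_1, x_2)\rho_1(x_1)\rho_2(x_2)\, dx_1 dx_2\bigr|$ with $g_{u,v}(x_1, x_2) := \dprod{u}{G(x_1, x_2) v}$ by $\Xi \int \|G(x_1, x_2)\|^s\phi_R(x_1)\phi_R(x_2)\, dx_1 dx_2$. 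Part (2) will then follow from part (1) by specialization to the diagonal $x_1 = x_2 = x/2$ under $F_1 = F_2$.

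For the scalar bound I would use two ingredients. The first is the resolvent identity
$$
R(A, x_1, x_2, z) - R(A, y_1, y_2, z) = -R(A, x_1, x_2, z)\bigl[(x_1-y_1)F_1 + (x_2-y_2)F_2\bigr] R(A, y_1, y_2, z),
$$
combined with $F_j \leq F$, which provides Lipschitz-type control of $G(x_1, x_2)$ in $(x_1, x_2)$ modulo factors of the Herglotz-type operators $F^\half R(A, \cdot) F^\half$. The second is the hypothesis $\rho_i \in C^{1,\tau}$ with $\tau > s$: performing iterated integration by parts (one variable at a time) transfers a derivative onto $g_{u,v}$, producing an integrand involving $\rho_i'$; using the $\tau$-H\"older difference quotients of $\rho_i'$ then expresses $g_{u,v}$ as being sampled at translated points, yielding a factor that can be interpolated via $|g_{u,v}|^1 = |g_{u,v}|^s \cdot |g_{u,v}|^{1-s}$, with the $|g_{u,v}|^{1-s}$ piece absorbed by the universal fractional-moment/spectral-averaging bound for the positive perturbation $A + x_1 F_1 + x_2 F_2$.

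The shift in the weight from $\rho_i$ (supported in $(0, R)$) to $\phi_R$ on the right arises naturally from the iterated integration by parts: the H\"older difference quotients of $\rho_i'$ sample $g_{u,v}$ at translated points whose locations are precisely those covered by $\phi_R$. The main obstacle, which I would tackle first, is securing uniformity in $z \in \CC^+$ and in the operators $A, B, F_1, F_2$: as $\mathrm{Im}(z) \downarrow 0$ both $\|R\|$ and $\|G\|$ individually blow up, yet the constant $\Xi$ must depend only on $\rho_1, \rho_2, s, \tau$. This uniformity is ultimately guaranteed by the Nevanlinna/Herglotz structure of $F^\half(A + x_1 F_1 + x_2 F_2 - z)^{-1}F^\half$ in $z$ and the associated universal fractional-moment bounds for positive perturbations, which make the $|g_{u,v}|^{1-s}$ absorption step independent of the underlying operator data.
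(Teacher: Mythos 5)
Your sketch has two gaps that I don't see how to close. First, passing to matrix elements $g_{u,v}(x_1,x_2)=\dprod{u}{G(x_1,x_2)v}$ with $u\neq v$ destroys the structure the estimate actually rests on. The paper's proof hinges on the fact that $\sqrt{F}\,(A+x_1F_1+x_2F_2-z)^{-1}\sqrt{F}$ is an operator-valued Herglotz function ($\Im(\cdot)\geq 0$ for $\Im z>0$); after the shift trick this lets one write $(\gamma I+\tilde R^t_{A,z})^{-1}=-\int_0^\infty e^{iw(\gamma I+\tilde R^t_{A,z})}\,dw$ as an integral of a contraction semigroup, and then apply the interpolation $\norm{e^{iwT_1}-e^{iwT_2}}\leq 2^{1-s}|w|^s\norm{T_1-T_2}^s$ against the Fourier decay $|\widehat{\psi_{t,\eta}}(w)|\lesssim|w|^{-1-\tau}$ coming from the $\tau$-H\"older continuity of $\rho_i'$. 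Off-diagonal matrix elements do not inherit the Herglotz property, so this whole mechanism is unavailable in your scalar reduction, and the replacement you propose --- the split $|g_{u,v}|=|g_{u,v}|^s|g_{u,v}|^{1-s}$ with the $(1-s)$-power ``absorbed by the universal fractional-moment/spectral-averaging bound'' --- is not underwritten by any estimate I know of: spectral averaging controls the \emph{imaginary part} of the \emph{diagonal} element of a \emph{single} resolvent, whereas here you would need uniform control of moments of order $1-s>\tfrac12$ of the modulus of a \emph{difference} of \emph{off-diagonal} kernels, which is precisely what one cannot do uniformly as $\Im z\downarrow 0$.

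Second, and more fundamentally, your mechanism for the support shift is not there. The weight $\phi_R$ is supported in $(-\tfrac{5}{2}R-1,-\tfrac{R}{2})$, disjoint from $(0,R)\supset\mathrm{supp}\,\rho_i$, so the inequality genuinely moves the coupling integral across the origin. Integration by parts and H\"older difference quotients of $\rho_i'$ only redistribute mass inside (or infinitesimally near) $\mathrm{supp}\,\rho_i$; they cannot translate it to the negative axis. In the paper this relocation is achieved by introducing the auxiliary parameter $t\in(-2R-1,-2R)$ and writing $A+x_1F_1+x_2F_2 = A^t+\tfrac{x_1-x_2}{2}(F_1-F_2)+(\tfrac{x_1+x_2}{2}-t)F$ with $A^t=A+tF$; since the original integral is $t$-independent one may average over $t$, and the change of variables $\hat x_j=t\pm\eta$ at the last step lands the weight exactly in $\mathrm{supp}\,\phi_R$. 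Without an analogue of this $t$-shift, the inequality you are aiming at simply cannot have the form stated in the theorem.
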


\begin{remark}
\label{dir-int}
 The integrals appearing in \eqref{eq:remarkable} and \eqref{eq:remarkable2} are viewed as operators in the sense of direct integrals (see \cite[Theorem XIII.85]{RMBS}).
This is the case because $X+ x_1 F_1+x_2 F_2$ is decomposable on
$$L^2\big(\mathbb{R}^2,\prod_i\rho(x_i)dx_i, \mathcal{H}\big).$$
Hence all the integrals of this operator valued function, that appear in the proof, are well-defined in the sense of direct integral representation.

\end{remark}

\begin{proof}
We define
$$A^{t}=A+t~F, ~~ B^{t}=B+ t~F,\qquad\forall -2R-1<t<-2R.$$ 
Then, we have the equality,
\begin{equation}\label{eq201}
A + x_1 F_1 + x_2 F_2=A^t +\left(\frac{x_1-x_2}{2}\right) (F_1 - F_2)+\left(\frac{x_1 + x_2}{2}-t\right)F .\end{equation}
Using the resolvent equation, we have, with $F_- = F_1 - F_2$,
\begin{align}\label{eq202}
& R(A, x_1, x_2, z) =\left(A^{t}+\left(\frac{x_1-x_2}{2}\right) F_- -z\right)^{-1}\nonumber \\
&-\left(\frac{x_1+x_2}{2}-t\right) R(A, x_1, x_2, z)F \left(A^{t}+\left(\frac{ x_1 -x_2}{2}\right) F_- -z\right)^{-1}
\end{align}
which can be re-written (using the notation $\tilde{A}^{t}=A^{t}+\left(\frac{x_1 - x_2}{2}\right) F_- $) as
\begin{align}\label{eq203}
&\sqrt{F}R(A, x_1, x_2, z)\sqrt{F}=\frac{1}{\frac{x_1+x_2}{2}-t}I\nonumber \\
&\qquad-\frac{1}{\left(\frac{x_1+x_2}{2}-t\right)^2}\left(\frac{1}{\frac{x_1+x_2}{2}-t} I+\sqrt{F} \left(\tilde{A}^{t}-z\right)^{-1} \sqrt{F}\right)^{-1}.
\end{align}
($I$ is the identity operator on the range of $\sqrt{F}$)
Similar relations hold for $B$, where $B^t, \tilde{B}^t$ are defined by
replacing $A$ with $B$ in the equations (\ref{eq201} - \ref{eq203}).
We set
$$
\tilde{R}^t_{A, z} = \sqrt{F}(\tilde{A}^t -z)^{-1}\sqrt{F}, ~~ \tilde{R}^t_{B, z} = \sqrt{F}(\tilde{B}^t -z)^{-1}\sqrt{F}. 
$$
Then using equation \eqref{eq203} we get the relation,
\begin{align}\label{eq:New300}
&\int \sqrt{F}(R(A, x_1, x_2, z)-R(B, x_1, x_2, z))\sqrt{F} ~\rho_1(x_1)\rho_2(x_2) dx_1 dx_2 \nonumber \\
&=\int \left[  \left(\frac{1}{\frac{x_1+x_2}{2}-t} I+ \tilde{R}^{t}_{A,z}\right)^{-1}-\left( \frac{1}{\frac{x_1+x_2}{2}-t} I+\tilde{R}^{t}_{B,z}\right)^{-1} \right] \nonumber  \\ 
&\qquad \frac{1}{\left(\frac{x_2+x_2}{2}-t\right)^2}\rho_1(x_1)\rho_2(x_2) dx_1 dx_2 \nonumber \\
&=2\int \left[  \left(\gamma I+ \tilde{R}^{t}_{A,z}\right)^{-1}-\left( \gamma I+\tilde{R}^{t}_{B,z}\right)^{-1} \right] \nonumber  \\ 
&\qquad\qquad\qquad \rho_1\left(t+\frac{1}{\gamma}+\eta\right)\rho_2\left(t+\frac{1}{\gamma}-\eta\right)d\gamma d\eta
\end{align}
where $\gamma=\left(\frac{x_1+x_2}{2}-t\right)^{-1}$ and $\eta=\frac{x_1-x_2}{2}$. 
For $X$ self-adjoint,  $\tilde{R}^{t}_{X,z}$ is an  operator valued Herglotz function 
and its imaginary part is a positive operator for $\Im(z) >0$. 
Hence the operators  $ \left(\gamma I+ \tilde{R}^{t}_{X,z}\right)$ 
generates a strongly continuous one parameter semi-group, and we can apply the 
Lemma \ref{lem:interchangingIntegrals} for the $\gamma$ integral, and then the 
do the $\eta$ integral to get
\begin{align}\label{eq:New301}
& \int \left[  \left(\gamma I+ \tilde{R}^{t}_{A,z}\right)^{-1}-\left( \gamma I+\tilde{R}^{t}_{B,z}\right)^{-1} \right]\nonumber \\ 
&\qquad\qquad\qquad\qquad\qquad \rho_1\left(t+\frac{1}{\gamma}+\eta\right)\rho_2\left(t+\frac{1}{\gamma}-\eta\right)d\gamma d\eta\nonumber\\
&=-\int \left[ \int_0^\infty \left(e^{i w\left(\gamma I+ \tilde{R}^{t}_{A,z}\right)}-e^{i w\left(\gamma I+ \tilde{R}^{t}_{B,z}\right)}\right)dw \right]\nonumber\\
&\qquad\qquad\qquad\qquad\qquad \rho_1\left(t+\frac{1}{\gamma}+\eta\right)\rho_2\left(t+\frac{1}{\gamma}-\eta\right)d\gamma d\eta\nonumber\\
&=-\int \int_0^\infty \left(e^{i w \tilde{R}^{t}_{A,z}}-e^{i w \tilde{R}^{t}_{B,z}}\right)~  e^{i \gamma w }\rho_1\left(t+\frac{1}{\gamma}+\eta\right)\rho_2\left(t+\frac{1}{\gamma}-\eta\right)d\gamma ~dw d\eta,
\end{align}
which can be bounded as 
\begin{align}\label{eq:lemBddonGreenFuncEq120}
& \bigg\|\int \int_0^\infty \left[ e^{i w \tilde{R}^{t}_{A,z}}-e^{i w \tilde{R}^{t}_{B,z}}\right]\nonumber\\
&\qquad\qquad\qquad e^{i \gamma w }\rho_1\left(t+\frac{1}{\gamma}+\eta\right)\rho_2\left(t+\frac{1}{\gamma}-\eta\right)d\gamma ~dw d\eta \bigg\|\nonumber \\
&\qquad\leq \int \norm{\left(e^{i w \tilde{R}^{t}_{A,z}}-e^{i w \tilde{R}^{t}_{B,z}}\right) }\nonumber\\
&\qquad \qquad\qquad \left|\int e^{ i\gamma w }\rho_1\left(t+\frac{1}{\gamma}+\eta\right)\rho_2\left(t+\frac{1}{\gamma}-\eta\right)d\gamma\right| dw d\eta.
\end{align}
 The assumption we made on the supports of $\rho_1, \rho_2$ implies that
 $-\frac{R}{2}<\eta<\frac{R}{2}$, and the choice ${-2R-1<t<-2R}$ implies $-\frac{5}{2}R-1<t\pm \eta<-\frac{3R}{2}$. 
This implies that 
$$
\bigg\{\gamma:\psi_{t,\eta}(\gamma)\neq 0, -\frac{5}{2}R-1<t\pm \eta<-\frac{3R}{2}\bigg\}  \subset \bigg(\frac{2}{2+7R},\frac{2}{3R}\bigg),
$$
where $\psi_{t,\eta}(\gamma) = \rho_1\left(t+\frac{1}{\gamma}+\eta\right)\rho_2\left(t+\frac{1}{\gamma}-\eta\right)$.
Thus for fixed $t, \eta$, the function $\psi_{t,\eta}(\gamma)$  is of compact support and has a 
$\tau$-H\"older continuous derivative as a function of $\gamma$, for the  $\tau$ stated as in the Theorem. 
Also, the derivative of $\psi_{t,\eta}$ is uniformly $\tau$-H\"older continuous 
and the constant is bounded uniformly in $t,\eta$, which follows from the support properties of $\psi_{t,\eta}$ and the bounds on $t,\eta$. 
Therefore, if we denote the Fourier transform of $\psi_{t,\eta}(-\gamma)$ by 
$\widehat{\psi_{t,\eta}}$, then standard Fourier analysis gives the bound, 
\begin{align*}
& \left|\int e^{i \gamma w }\rho_1\left(t+\frac{1}{\gamma}+\eta\right)\rho_2\left(t+\frac{1}{\gamma}-\eta\right)d\gamma\right|\\
&\qquad \leq \frac{C}{|w|^{1+\tau}}\left( \norm{|w|^{1+\tau} \widehat{\psi_{t,\eta}}(w)}_\infty  \right)  \leq \frac{\tilde{C}}{|w|^{1+\tau}} ~~ for~|w|\gg 1
\end{align*}
for some $\tilde{C}$ independent of $t, \eta$ but depends on $\rho_1, \rho_2$
.
Again using the bounds on $t,\eta$ and $\gamma$, we see that for 
small $|w|$, the $w$ integral is bounded uniformly in $t,\eta$,
by the $L^\infty$ norm of $\rho_1$ and $\rho_2$ and hence
$\tilde{C}$ is independent $t, \eta$ for all $w$. 

On other hand using  the Lemma \ref{lem:semiGrpNrmbdd}, we have 
\begin{align*}
\norm{ e^{ iw \tilde{R}^{t}_{A,z}}-e^{i w \tilde{R}^{t}_{B,z}} }\leq 2^{1-s}|w|^s\norm{\tilde{R}^{t}_{A,z}-\tilde{R}^{t}_{B,z}}^s
\end{align*}
for $0<s<1$. By choosing $s< \tau /2 $ and using above bounds in 
\eqref{eq:lemBddonGreenFuncEq120} we have
\begin{align}\label{eq:New302}
& \bigg\|\int \int_0^\infty \left(e^{ i w \tilde{R}^{t}_{A,z}}-e^{i w \tilde{R}^{t}_{B,z}}\right)\nonumber\\
&\qquad\qquad\qquad e^{ i\gamma w }\rho_1\left(t+\frac{1}{\gamma}+\eta\right)\rho_2\left(t+\frac{1}{\gamma}-\eta\right)d\gamma ~dw d\eta \bigg\|\\
&\qquad\leq \hat{C} \left(1+\int_1^\infty \frac{1}{w^{1+\tau-s}}dw \right) \int  \norm{\tilde{R}^{t}_{A,z}-\tilde{R}^{t}_{B,z}}^{s} d\eta,
\end{align}
The integral we started with is independent of $t$ so we can integrate it
with respect to the Lebesgue measure on an interval of length one.
Therefore, combining  the inequalities (\ref{eq:New300}, \ref{eq:New301}, \ref{eq:lemBddonGreenFuncEq120}, \ref{eq:New302}) and integrating $t$ over an interval of length 1, yields 

\begin{align*}
&\norm{\int \sqrt{F}(R(A, x_1, x_2,z) -R(B, x_1, x_2, z) \sqrt{F} ~\rho_1(x_1)\rho_2(x_2) dx_1 dx_2 }\\
&\qquad= \int_{-2R-1}^{-2R} \norm{\int \sqrt{F}(R(A, x_1, x_2,z) -R(B, x_1, x_2, z) \sqrt{F} ~\rho_1(x_1)\rho_2(x_2) dx_1 dx_2 } dt \\
&\qquad\leq C  \int^{-2R}_{-2R-1} \int_{-\frac{R}{2}}^{\frac{R}{2}}   \norm{\tilde{R}^{t}_{A,z}-\tilde{R}^{t}_{B,z}}^{s} d\eta dt \\
&\qquad \leq C \int\int \bigg\| \sqrt{F}\left(A+\hat{x}_1 F_1+\hat{x}_2 F_2 -z\right)^{-1}\sqrt{F}\\
&\qquad\qquad\qquad\qquad - \sqrt{F}\left(B +\hat{x}_1 F_1 +\hat{x}_2 F_2-z\right)^{-1}\sqrt{F}\bigg\|^s \phi_R(\hat{x}_1)\phi_R(\hat{x}_2) ~  d\hat{x}_1 d\hat{x}_2.
\end{align*}
For the last inequality we used the definition of $\tilde{R}^{t}_{X,z}$ 
changed variables $\hat{x}_1=t+\eta, ~ \hat{x}_2=t-\eta$ along
with a slight increase in the range of integration to accommodate 
the bump $\phi_R$ to have their supports in $(-\frac{5}{2}R -1, -\frac{R}{2})$.
\end{proof}

\section{The Discrete case}
 Let $\GG$ denote a un-directed connected graph with a graph-metric $d$. Let $\{x_n\}_n$ denote an enumeration of $\GG$ satisfying
$d(\Lambda_N,x_{N+1})=1$ for any $N \in\NN$, where 
\begin{equation}\label{lambdaM}
\Lambda_N=\{x_n: n\leq N\}, ~~ \Lambda_\infty = \GG, 
\end{equation}
and
\begin{equation}\label{growthofG}
\liminf_{N\rightarrow\infty} \frac{d(x_0,\GG\setminus\Lambda_N)}{g(N)} = r_\GG >0,
\end{equation}
for some increasing function $g$ on $\RR^+$.
Typically, we will have $g(N) = N^{1/d}$ for $\GG = \ZZ^d$ and $g(N) = \log_{K}(N) $
for the Bethe lattice with connectivity $K > 2$.
Henceforth for indexing $\GG$ we will say $n \in \GG$ to mean $x_n \in \GG$.

Let $\Hi$ be a complex separable Hilbert space equipped with a countable
family $\{P_n\}_{n\in \GG}$ of finite rank orthogonal projections
such that $\sum_{n \in \GG} P_n = Id$,  with
the maximum rank of $P_n$ being finite,  thus
$$
\displaystyle{\Hi = \bigoplus_{n\in \GG} Ran(P_n).}
$$
Let $h_0$ denote a bounded self-adjoint operator 
on $\Hi$ and consider the random operator, we stated in equation (\ref{disc}),
\begin{equation}\label{eq:randOp}
 h^\omega=h_0+\sum_{n \in \Lambda_\infty} \omega_n P_n,
\end{equation}
where the random variables $\omega_n$ satisfies Hypothesis \ref{hyp:potential} below.
Given a finite subset $\Lambda\subset\GG$, we will denote $P_{\Lambda}=\sum_{n\in \Lambda} P_n$, $\Hi_\Lambda= P_{\Lambda}\Hi$ and
\begin{equation}\label{eq:cutOffRandOp}
 h^\omega_\Lambda=P_{\Lambda} h^\omega P_{\Lambda}
\end{equation}
denotes the restriction of $h^\omega$ to $\Hi_\Lambda$.

We have the following assumptions on the quantities involved in the model.
\begin{hypothesis}\label{hyp:potential}
We assume that the random variables $\omega_n$ are independent and
distributed according to a density $\rho_n$ which are compactly supported
in $(0, 1)$and satisfy $\rho_n \in C^m((0, 1))$ for some $m \in \NN$ and
\begin{align}\label{eq:potentialassm}
\dd = \sup_{n} \max_{\ell \leq m} \|\rho_n^{(\ell)}\|_\infty < \infty.
\end{align}
\end{hypothesis}
We note that as long as $\rho_n\in C^m((a,b))$ for some $-\infty<a<b<\infty$,  
a scaling and translation will move its support to $(0,1)$. 
So our support condition is no loss of generality.

\begin{hypothesis}\label{hyp:expLocHyp}
A compact interval $J\subset \subset \RR$ is said to be in 
\emph{region of localization} for $h^\omega$ with exponent $0 < s  < 1$ 
and rate of decay $\xi_s>0$, if there exist $C>0$ such that
\begin{equation}\label{eq:locRes}
\sup_{\Re(z) \in J, \Im(z) >0}\Ex_\omega\left[\norm{P_n(h^\omega- z)^{-1}P_k}^s\right]\leq C e^{-\xi_s d(n,k)}
\end{equation}
for any $n,k \in \GG$.  For the operators $h_{\Lambda_K}^\omega$
exponential localization is defined with 
$\Lambda, h^\omega_{\Lambda_K},\xi_{s,\Lambda_K}$ replacing
$\GG,h^\omega,\xi_s$ respectively in the above bound. 

{\it We assume  
that for our models, for all $\Lambda_K$, with $K \geq N$ the inequality
(\ref{eq:locRes}) holds for some $\xi_s >0$ and  
$\xi_{s, \Lambda_K} \geq \xi_{s}$, for all $\Lambda_{K}$ with $K \geq N$.
We also assume that the constants $C, \xi_s$ do not change if we change
the distribution $\rho_n$ with one of its derivatives at finitely many sites $n$.} 
\end{hypothesis}

\begin{remark}\label{changerhodisc}
For large disorder models one can get explicit values for $\xi_s$
from the papers of Aizenman-Molchanov \cite{MR1244867} or 
Aizenman \cite{MR1301371}.
For example the Anderson model on $\ell^2(\ZZ^d)$ with disorder parameter 
$\lambda >> 1$, typically $\xi_s = -s\ln\frac{C_{s,\rho} 2d }{\lambda}$, 
for some constant $C_{s,\rho} < \infty$ 
that depends on the single-site density $\rho$ and is independent of 
$\Lambda$. So $\xi_{s,\Lambda} = \xi_s >0$ for large enough $\lambda$.  
Similarly for the Bethe lattice with connectivity $K+1 > 1$, 
$\xi_{s, \Lambda} = \xi_s = - s \ln\frac{C_{s,\rho} (K+1)}{\lambda}$.   
Going through Lemma 2.1 their paper, and tracing through the constants,
we see that our assumption about changing the distribution at finitely 
many sites is valid.
\end{remark}

Henceforth let $E_A(\cdot)$ denote the projection valued spectral measure of
a self-adjoint operator $A$.
Our main goal in this section is to show that
$$\etan(E)=\Ex_\omega\left[tr(P_0E_{h^\omega}(-\infty,E))\right]\qquad $$
is $m$ times differentiable in the region of localization,
if $\rho$ has a bit more than $m$ derivatives,
which means that the density of states DOS is $m-1$ times differentiable.
Our theorem is the following, where we tacitly assume that the spectrum
$\sigma(h^\omega)$ is a constant set, a fact proved by Pastur \cite{Pastur}
for a large class of random self-adjoint operators.

\begin{theorem}\label{thm:smoothnessThmDiscreteCase}
Consider the random self-adjoint
operators $h^\omega$ given in equation \eqref{eq:randOp} 
on the Hilbert space $\Hi$ and a graph $\GG$ satisfying the condition \eqref{growthofG} with $g(N) = N^\alpha$, for some $\alpha >0$.
We assume that $\omega_n$ is distributed with density $\rho_n$ satisfying
the Hypothesis \ref{hyp:potential} and, with $m$ as in the Hypothesis,  
$\rho_n^{(m)}$ is $\tau$-H\"older continuous for some $ 0 < \tau < 1$.
Assume that $J$ is an interval in the region of localization for which 
the Hypothesis \ref{hyp:expLocHyp} hold for some $0 < s < \tau$.  
Then the function
\begin{equation}\label{eq:IDS}
\etan(E)=\Ex_\omega\left[ tr(P_0 E_{h^\omega}(-\infty,E))\right] \in C^{m-1}(J)
\end{equation}
and $\etan^{(m)}(E)$ exists a.e. $E \in J$.
\end{theorem}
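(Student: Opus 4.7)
The plan is to study the averaged resolvent
$$F_\epsilon(E) := \EE_\omega\bigl[\operatorname{tr}\bigl(P_0(h^\omega - E - i\epsilon)^{-1}\bigr)\bigr], \qquad \epsilon > 0,$$
whose imaginary boundary value yields the density of states $\etan'(E)$. I will show that $E\mapsto F_\epsilon(E)$ is $C^m$ on $J$ with derivatives bounded uniformly in $\epsilon \in (0,1]$, and that $F_\epsilon^{(\ell)}$ is Cauchy as $\epsilon \to 0^+$ for $\ell \leq m-1$. Standard arguments then give $\etan \in C^{m-1}(J)$, and the uniform $L^\infty$ bound on $F_\epsilon^{(m)}$ yields $\etan^{(m)}$ a.e.\ via Lebesgue differentiation.

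Since $h^\omega$ depends on infinitely many random variables, I first truncate. Let $A_K^\omega$ be a finite-rank random modification of $h^\omega$ retaining only the couplings $\{\omega_n\}_{n\in \Lambda_K}$, chosen so that (i) $A_K^\omega \to h^\omega$ in strong resolvent sense as $K\to\infty$, (ii) the complete covering condition continues to hold in the relevant Hilbert space and Theorem \ref{thm:finitesmooth} applies, and (iii) Hypothesis \ref{hyp:expLocHyp} still provides fractional moment bounds with some exponent $0 < s < \tau$, uniformly in $K$. With $F_\epsilon^{(K)}(E) := \EE[\operatorname{tr}(P_0 (A_K^\omega - E - i\epsilon)^{-1})]$ and $N_0$ chosen so that $x_0 \in \Lambda_{N_0}$, I write the telescoping identity
$$F_\epsilon(E) = F_\epsilon^{(N_0)}(E) + \sum_{K \geq N_0} \bigl(F_\epsilon^{(K+1)}(E) - F_\epsilon^{(K)}(E)\bigr).$$
Theorem \ref{thm:finitesmooth} and formula \eqref{nabalform} show that each $F_\epsilon^{(K)}$ is $C^m$ in $E$, with derivatives expressible through convolution against ${\bf D}^\ell \Phi_K$, where $\Phi_K(\vec\omega) = \prod_{n\in\Lambda_K}\rho_n(\omega_n)$. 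The task thus reduces to proving uniform summability, for each $\ell \leq m$, of
$$\sum_{K\geq N_0} \bigl\|(F_\epsilon^{(K+1)} - F_\epsilon^{(K)})^{(\ell)}\bigr\|_{L^\infty(J)}.$$

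The core estimate is the exponential bound on each summand. Because $A_{K+1}^\omega - A_K^\omega = \omega_{K+1}P_{K+1}$ is rank-bounded and supported at the single site $x_{K+1}$, the second resolvent identity factorizes $(A_{K+1}^\omega-z)^{-1} - (A_K^\omega-z)^{-1}$ into a product exhibiting both $P_0(A_{K+1}^\omega-z)^{-1}P_{K+1}$ and $P_{K+1}(A_K^\omega-z)^{-1}P_0$. Expanding ${\bf D}^\ell \Phi_{K+1}$ into its $O(|\Lambda_{K+1}|^\ell)$ monomial terms, I apply Theorem \ref{thm:remarkable} (Part 2, with $F = P_{K+1}$) to exchange the convolution smoothing in the $\omega_{K+1}$ variable for a fractional moment bound on $\|P_0(A^\omega-z)^{-1}P_{K+1}\|^s$. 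Hypothesis \ref{hyp:expLocHyp} then supplies
$$\EE\bigl[\|P_0(A^\omega - z)^{-1}P_{K+1}\|^s\bigr] \leq C e^{-\xi_s d(x_0, x_{K+1})} \leq C e^{-\xi_s r_\GG K^\alpha},$$
uniformly in $\epsilon > 0$ and $\Re z \in J$. Since $|\Lambda_{K+1}|$ is polynomially bounded in $K$ while the decay is stretched exponential with $\alpha > 0$, the polynomial prefactor is dominated and the series converges absolutely and uniformly.

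The main obstacle will be those monomial terms of ${\bf D}^\ell \Phi_{K+1}$ in which \emph{no} derivative falls on $\rho_{K+1}$, because Theorem \ref{thm:remarkable} cannot then be applied directly in the $\omega_{K+1}$ direction. I expect to handle these by first integrating by parts in one of the variables $\omega_j$ ($j\in\Lambda_K$) that does carry a derivative, effectively replacing $\rho_j$ by one of its derivatives at that single site, and thereby reducing to an expression in which Theorem \ref{thm:remarkable} can indeed act on $\omega_{K+1}$. The stability clause of Hypothesis \ref{hyp:expLocHyp}, asserting that the localization constants are preserved when finitely many densities are replaced by their derivatives, is precisely what legitimizes this maneuver. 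Once uniform summability is secured, passing $\epsilon \to 0^+$ gives $\etan \in C^{m-1}(J)$, and the uniform $L^\infty$ bound on $F_\epsilon^{(m)}$ yields $\etan^{(m)}$ a.e.\ in $J$.
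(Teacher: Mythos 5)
Your high-level plan (telescoping the averaged Borel--Stieltjes transform, Theorem \ref{thm:finitesmooth} for the finite approximants, Theorem \ref{thm:remarkable} plus fractional moments for summability) matches the paper's, but two of your technical choices would break the argument. The truncation $A_K^\omega = h_0 + \sum_{n\in\Lambda_K}\omega_n P_n$ on the \emph{full} space $\Hi$ fails the complete covering condition: $\sum_{n\in\Lambda_K}P_n = P_{\Lambda_K}\neq I$, so the substitution $\omega_n\mapsto\omega_n - E$ turns $A_K^\omega$ into $A_K^\omega - EP_{\Lambda_K}$, not into $A_K^\omega - EI$, and $\Ex_\omega[tr(P_0(A_K^\omega - z)^{-1})]$ is not a convolution of $\Phi_K$ against a fixed kernel. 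Theorem \ref{thm:finitesmooth} therefore does not apply, so your requirement (ii) is unsatisfiable with this $A_K^\omega$. The paper instead uses the finite-volume restrictions $h^\omega_{\Lambda_K} = P_{\Lambda_K}h^\omega P_{\Lambda_K}$ on $\Hi_{\Lambda_K}$, where covering does hold; but then $h^\omega_{\Lambda_{K+1}}-h^\omega_{\Lambda_K}$ also contains the hopping terms $P_{\Lambda_K}h_0P_{K+1}$ and $P_{K+1}h_0P_{\Lambda_K}$, and it is the $h_0$ coupling (not $\omega_{K+1}P_{K+1}$) that yields the factorization $P_0(\cdot)^{-1}P_{\Lambda_K}h_0P_{K+1}(\cdot)^{-1}P_0$ in Lemma \ref{lem:New301} from which the exponential decay is extracted.

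The second gap is the choice of integration variable in Theorem \ref{thm:remarkable}. That theorem controls $\int F^{1/2}\bigl(R(A,x,z)-R(B,x,z)\bigr)F^{1/2}\rho(x)\,dx$ precisely because \emph{both} resolvents carry the same linear dependence $xF$ on the variable $x$. In the telescoping difference the term $G_K^\omega$ does not depend on $\omega_{K+1}$ at all, regardless of the truncation scheme, so there is no admissible pair $(A,B)$ in that direction, and taking $F = P_{K+1}$ is not available. The paper applies Theorem \ref{thm:remarkable} to the $\omega_0$ integral with $F = P_0$: both $h^\omega_{\Lambda_K}$ and $h^\omega_{\Lambda_{K+1}}$ contain $\omega_0 P_0$, so the hypotheses of the theorem are met. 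Once the $\omega_0$ integral is converted from $\bigl\|\int(\cdot)\,\rho_0^{(k_0)}\,d\omega_0\bigr\|$ to $\int\|\cdot\|^s\,\phi_R(\omega_0)\,d\omega_0$, the remaining variables, including $\omega_{K+1}$, are handled by normalizing $|\rho_n^{(k_n)}|$ into probability densities and invoking the modified-density fractional moment bounds from Hypothesis \ref{hyp:expLocHyp}. Your closing worry about ``monomials where no derivative falls on $\rho_{K+1}$'' is therefore off target: what matters is not which $\rho_n$ receives a derivative, but which variable carries the Theorem \ref{thm:remarkable} conversion, and it must be $\omega_0$, never $\omega_{K+1}$.
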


\begin{remark} 
\begin{enumerate}
 \item We stated the Theorem in this generality so that it applies to multiple models, such as the
Anderson models on $\ZZ^d$ other lattice or graphs, having the property
that the number of points at a distance $N$ from any fixed point grow 
polynomially in $N$.  The models for which
this Theorem is valid also include
higher rank Anderson models, long range hopping with some 
restrictions, models with off-diagonal disorder to state a few.  In all of these models, by including 
sufficiently high diagonal disorder, through a coupling constant $\lambda$ on 
the diagonal part, we will have 
exponential localization for the corresponding operators via the 
Aizenman-Molchanov method.  
So this Theorem  gives the Regularity of DOS in all such models.  
For the Bethe lattice and other countable sets
for which $g(N)$ is like $\ln(N)$,
our results hold but the smoothness $m$ that can be obtained 
is restricted by the localization length by a condition such as 
$\xi_s > m \ln K$.  So in this work we do not consider such type of setting.

\item This Theorem also gives smoothness of DOS in the region of localization
 for the intermediate disorder cases
considered for example by Aizenman \cite{MR1301371} who exhibited exponential localization for such models in part of the spectrum.

\item In the case $h^\omega$ is not the Anderson model, all these results 
are new and it is not clear that the method
of proof using super symmetry, as done for the Anderson model at high disorder,
will even work for these models.

\item We note that in the proof we will take at most $m-1$ derivatives
of resolvent kernels in the upper half-plane and show their boundedness,
but we have a condition that the function $\rho$ has a $\tau$-H\"older
continuous derivative.  The extra $1+\tau$ `derivatives' are needed for 
applying the Theorem \ref{thm:remarkable} in the inequality (\ref{eq:New343})
from the inequality (\ref{eq:New341}).
\end{enumerate}

\end{remark}

\begin{proof} 
Since the orthogonal projection $P_0$ is finite rank, we can write 
$P_0 = \sum_{i=1}^r |\phi_i\rangle\langle\phi_i|$ using a set 
$\{\phi_i\}$ of finitely many orthonormal vectors.
Then we have, 
$$
\etan(E) = \sum_{i=1}^r \Ex_\omega\left(\langle \phi_i, E_{h^\omega}((-\infty, E)) \phi_i \rangle\right).
$$
The densities of the measures 
$\langle \phi_i, E_{h^\omega}(\cdot) \phi_i \rangle$ are bounded by 
Lemma \ref{specavg} for each $i=1, \dots, r$.  Hence $\etan$ is differentiable 
almost everywhere and its derivative, almost everywhere, 
is given by the boundary values,
$$
\frac{1}{\pi} \Ex_\omega \bigg( tr\left(P_0\Im(h^\omega - E - i0)^{-1}\right) \bigg)
$$ 
is bounded.  
The Theorem follows from Lemma \ref{derivatives} once we show
\begin{equation}\label{toshowdisc}
\sup_{\Re(z) \in J, \Im(z) >0} \frac{d^\ell}{dz^\ell} \Ex_\omega \left[tr(P_0(h^\omega-z)^{-1})\right] < \infty,
\end{equation}
for all $\ell \leq m-1$, since such a bound implies that $m-1$ derivatives
of $\eta$ are continuous and its $m$th derivative exists almost everywhere, 
since $h^\omega$  are bounded operators.  
The projection $P_0$ is finite rank
which implies that the bounded operator valued analytic functions  
$P_0(h^\omega - z)^{-1}, P_0(h_\Lambda^\omega - z)^{-1}$ are 
trace class for $z \in \CC^+$.   
Therefore the linearity of the trace and the dominated convergence 
theorem together imply that 
\begin{equation}\label{eq:New310}
\Ex_\omega \left[tr(P_0(h^\omega-z)^{-1}-P_0(h^\omega_{\Lambda}-z)^{-1})\right]\xrightarrow{\Lambda\rightarrow\GG}0,
\end{equation} 
compact uniformly in $\CC^+$.  For the 
rest of the proof we set $h_K^\omega = h_{\Lambda_K}^\omega$  for ease
of writing.

The convergence given in equation (\ref{eq:New310}) 
implies that the telescoping sum,
\begin{align*}
&\Ex_\omega\left[ tr(P_0(h^\omega_{M}-z)^{-1})\right] 
\\ &= \sum_{K=N}^{M} \bigg(\Ex_\omega\left[ tr(P_0(h^\omega_{K+1}-z)^{-1})\right]
-\Ex_\omega\left[ tr(P_0(h^\omega_K-z)^{-1})\right]\bigg)
\\ & ~~~~ +\Ex_\omega\left[ tr(P_0(h^\omega_N-z)^{-1})\right]
\end{align*}
also converges compact uniformly, in $\CC^+$ to 
$$
\Ex_\omega \left[tr(P_0(h^\omega-z)^{-1})\right],
$$
which implies that their derivatives of all orders also converge compact
uniformly in $\CC^+$.  

Therefore the inequality (\ref{toshowdisc}) follows
if we prove the following uniform bound, for all $ 0 \leq \ell \leq m-1$ 
and $N$ large,
\begin{equation}\label{eq:thm1pfEq1}
\sum_{K=N}^{\infty}\displaystyle{\sup_{\Re(z) \in J} \left|\frac{d^\ell}{dz^\ell}\bigg(\Ex_\omega\left[ tr(P_0(h^\omega_{K+1}-z)^{-1})
-\Ex_\omega\left[ tr(P_0(h^\omega_K-z)^{-1})\right]\bigg)\right]\right|} < \infty.
\end{equation}

To this end  we only need to estimate
\begin{equation}\label{eq:thm1pfEq7}
\bigg|\frac{d^l}{dz^l}\Ex_\omega\left[ tr(P_0(h^\omega_{K+1}-z)^{-1}P_0)-tr(P_0(h^\omega_K-z)^{-1}P_0)\right]\bigg|
\end{equation}
for $\Re(z) \in J$ where we used the trace property to get an extra $P_0$
on the right and set 
$G^\omega_{M}(z)=P_0(h^\omega_{M}-z)^{-1}P_0, ~~ M \in \NN$
for further calculations. 

The function 
$$
f_\epsilon(\vec{\omega}-E\vec{1}) = tr(G^\omega_{K}(E+i\epsilon)) 
$$
is a complex valued bounded measurable function on $\RR^{K+1}$ for each
fixed $\epsilon >0$.  Therefore we compute the derivatives in 
$E$ of its expectation
$$
h_\epsilon(E) = \Ex_\omega \big(f_\epsilon(\vec{\omega}-E\vec{1})\big) 
= \EE\big( tr(G^\omega_{K}(E+i\epsilon)^{-1})\big) 
$$
using Lemma \ref{thm:finitesmooth}.  This caculation gives in the
notation of that Lemma,
\begin{equation}\label{eq:New311}
\frac{d^\ell}{dE^\ell} \Ex_\omega\big( tr(G^\omega_K(E+i\epsilon))\big)
= \int tr(G^\omega_K( E + i\epsilon))  {\bf D}^\ell \Phi_K(\vec{\omega})d\vec{\omega},
\end{equation}
where we set 
$\displaystyle \Phi_K(\vec{\omega}) = \prod_{n \in \Lambda_K} \rho_n(\omega_n),
~ d\vec{\omega} = \prod_{n \in \Lambda_K} d\omega_n$.

It is not hard to see that for each $0 \leq \ell \leq m-1$,
\begin{equation}\label{eq:New312}
\int tr(G^\omega_K(E + i\epsilon))  {\bf D}^\ell \Phi_K(\vec{\omega})d\vec{\omega},
=\int tr(G^\omega_K( E + i\epsilon))  {\bf D}^\ell \Phi_{K+1}(\vec{\omega})d\vec{\omega},
\end{equation}
since the integrand is independent of $\omega_n, n \in \Lambda_{K+1}\setminus
\Lambda_K$ and $\rho_n$ satisfies $\int \rho_n^{(j)}(x)dx = \delta_{j0}$. 
We set
\begin{equation}\label{eq:New315}
R(\omega, K, E, \epsilon) =  tr\left(G^\omega_{K+1}(E+i\epsilon) - G^\omega_K(E+i\epsilon)\right)
\end{equation}
to simplify writing.  We may write the argument $\omega$ of $R(\omega, K, E, \epsilon)$ below
in terms of the vector notation $\vec{\omega}$ for uniformity as it is
a function of the variables $\{\omega_n, n \in \Lambda_{K+1}\}$.

Then combining the equations (\ref{eq:New311}, \ref{eq:New312}) inside the
absolute value of the expression in equation (\ref{eq:thm1pfEq7}) to be 
estimated we have to consider the quantity, for $K \geq N$,
\begin{align}\label{eq:New313}
&T_{K,\ell}(E,\epsilon) = \frac{d^\ell}{dE^\ell} \Ex_\omega\left[ tr\big(G^\omega_{K+1}(E+i\epsilon) - G^\omega_{K}(E+i\epsilon)\big)\right] \nonumber \\
&= \int_{\RR^{K+1}} R(\vec{\omega}, K, E, \epsilon) ({\bf D}^\ell \Phi_{K+1})(\vec{\omega})d\vec{\omega}.
\end{align} 
To prove the theorem we need to show that
\begin{equation}\label{eq:New314}
\sum_{K=N}^\infty \sup_{E \in J, \epsilon >0} |T_{K, \ell}(E, \epsilon)| < \infty.
\end{equation}
Multinomial expansion of 
$\displaystyle{{\bf D}^\ell = \bigg( \sum_{n \in \Lambda_{K+1}} \frac{\partial}{\partial \omega_n} }\bigg)^\ell$ gives the relation
\begin{align}\label{eq:New325}
&T_{K,\ell}(E, \epsilon)  \nonumber \\
&=\displaystyle{ \sum_{\substack{k_0+\dots+k_{K} = \ell\\ k_n \geq 0}} \big(\substack{\ell \\ k_0, \dots, k_{K}}\big)\int_{\RR^{K+1}} R(\vec{\omega}, K, E, \epsilon) \bigg(\prod_{n=0}^{K+1}\frac{\partial^{k_n}}{{\partial^{k_n}\omega_{n}}} \rho_n(\omega_{n})d\omega_{n}\bigg)}.
\end{align} 
We use Fubini to interchange the trace and an integral over $\omega_0$ to
get 
\begin{align}\label{eq:New326}
&T_{K,\ell}(E, \epsilon) \nonumber \\
&=\displaystyle{\sum_{\substack{k_0+\dots+k_{K} = \ell\\ k_n \geq 0}} \big(\substack{\ell \\ k_0, \dots, k_{K}}\big)} \int_{\RR^{K}} tr\bigg(\int \big(G_{K+1}^\omega(E+i\epsilon) - G_{K}^\omega(E+i\epsilon)\big){\rho_0^{(k_0)}(\omega_0) d\omega_0}\bigg) \nonumber \\
&\times \bigg(\prod_{n\in \Lambda_{K+1}, n \neq 0} \rho_n^{(k_n)}(\omega_{n})d\omega_{n}\bigg) .  
\end{align}
Then taking absolute value of $T$ and estimate the $\omega_0$ integrals
using the Theorem \ref{thm:remarkable}, displaying explicitly the dependence
on the $\rho$ or its derivatives in the constant $\Xi$ appearing 
in that theorem, to get, for $0 < s < 1/2$ (the choice for $s$ will
become clear in Lemma \ref{lem:New301}),   
\begin{align}\label{eq:New341}
&T_{K,\ell}(E, \epsilon) \leq \nonumber \\
&\qquad \displaystyle{\sum_{\substack{k_0+\dots+k_{K} = \ell\\ k_n \geq 0}} \big(\substack{\ell \\ k_0, \dots, k_{K}}\big)}
\Xi(\rho_0^{(k_0)}) tr(P_0)\nonumber \\
&\qquad\times  \int_{\RR^{K}} \bigg(\int \|\big(G_{K+1}^\omega(E+i\epsilon) - G_{K}^\omega(E+i\epsilon)\big)\|^s \phi_R(\omega_0) d\omega_0\bigg) \nonumber \\
&\qquad\qquad\times \big(\prod_{n\in \Lambda_{K+1}, n \neq 0} |\rho_n^{(k_n)}(\omega_{n})|d\omega_{n}\big) .  
\end{align}
We set
\begin{align}\label{eq:New735}
\tilde{\rho_n} = \frac{\rho_n^{(k_n)}}{\|\rho_n^{(k_n)}\|_1}, ~~ n \neq 0, ~~
\tilde{\rho_0} = \frac{\phi_R}{\|\phi_R\|_1} 
\end{align}
and set, using the inequality (\ref{eq:potentialassm}), 
$C_0=\max\{\dd, \|\phi_R\|_1\}$, where $\dd$ is such that 
$\|\rho^{(k_n)}_n\|_1\leq \dd~\forall~n \neq 0$. 
We note here that at most $\ell$ of $\tilde{\rho}_n$ differ from 
$\rho_n$ itself and that $\|\rho_n\|_1 = 1$. We then we get the bound
\begin{align}\label{eq:New343}
&|T_{K,\ell}(E, \epsilon)|  \nonumber \\
&\qquad\leq C_0^\ell \sup_{j\leq \ell} \Xi(\rho^{(j)}) tr(P_0) 
\sum_{\substack{k_0+\dots+k_{K} = \ell\\ k_n \geq 0}} \big(\substack{\ell \\ k_0, \dots, k_{K}}\big)\nonumber\\
& \qquad\qquad\times \int_{\RR^{K+1}} \|\big(G_{K+1}^\omega(E+i\epsilon) - G_{K}^\omega(E+i\epsilon)\big)\|^s \prod_{n \in \Lambda_{K+1}}  \tilde{\rho}_{n}(\omega_{n})d\omega_{n}
\end{align}
We denote the probability measure 
$$
d\mathbb{P}_{(k)}(\vec{\omega}) = \prod_{n \in \Lambda_{K+1}}  \tilde{\rho}_{n}(\omega_{n})d\omega_{n},
$$ 
and expectation as $\EE_{\mathbb{P}_{(k)}}$, since it does not 
depend on the indices.  We also set, 
$$
 C_{1,m}=\sup_{0\leq \ell\leq m}\{C_0^\ell\},~ C_{2, m} = 
\sup_{n \in \GG, j\leq m} \{\Xi(\rho_n^{(j)})\}.
$$
Then the inequality (\ref{eq:New343}) becomes
\begin{align}\label{eq:New344}
|T_{K,\ell}(E, \epsilon)| &\leq C_{1,m} C_{2,m}tr(P_0) \displaystyle{ \sum_{k_{0}+\dots+k_{K} = \ell} \big(\substack{\ell \\ k_{0}, \dots, k_{K}}\big)} \nonumber \\
&\qquad \times \EE_{\mathbb{P}_{(k)}} \bigg[\|\big(G_{K+1}^\omega(E+i\epsilon) - G_{K}^\omega(E+i\epsilon)\big)\|^s \bigg].
\end{align}
We use the estimate for the expectation $\EE_{K+1}(\cdot)$ from 
Lemma \ref{lem:New301} to get the following bound, for some constant 
$C_6$ independent of $K$,
\begin{align}
\sup_{ E \in J, \epsilon >0} |T_{K,\ell}(E, \epsilon)| 
& \leq C_6C_5 \displaystyle{ \sum_{\substack{k_{0}+\dots+k_{K} = \ell}}} \big(\substack{\ell \\ k_{0}, \dots, k_{K}}\big) 
(1+2K) e^{-\xi_{2s} K^\alpha} \nonumber \\
& \leq C_6 C_5 (K+1)^\ell (1+2K) e^{-\xi_{2s} |K|^\alpha}.
\end{align}
From this bound
 the summability stated in the inequality (\ref{eq:New314}) 
follows since we assumed that $\xi_{2s} >0$, completing the proof 
of the Theorem. 
\end{proof}
We needed the exponential bound on the resolvent estimate, which is the focus of the following lemma.

\begin{lemma}\label{lem:New301}
We take the interval $J$ stated in Theorem \ref{thm:smoothnessThmDiscreteCase}, then we have the bound
\begin{align*}
&\sup_{\Re(z) \in J, \Im(z) >0} \EE_{\mathbb{P}_{(k)}} \bigg[\big\|\big(G_{K+1}^\omega(z) - G_{K}^\omega(z)\big)\big\|^s \bigg]
\\ 
&\qquad\qquad\leq C_5(m,Rank(P_0), \dd, h_0, R,s) (2K+1) e^{-\xi_{2s} K^\alpha}. 
\end{align*}
\end{lemma}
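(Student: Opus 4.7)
The plan is to reduce the operator difference $G^\omega_{K+1}(z)-G^\omega_K(z)$ to a product of two off-diagonal resolvent matrix elements that span the distance from $x_0$ to the ``far away'' site $x_{K+1}$, and then invoke the fractional moment localization bound of Hypothesis~\ref{hyp:expLocHyp}. First, I would view both $h^\omega_K$ and $h^\omega_{K+1}$ as self-adjoint operators on $\Hi_{\Lambda_{K+1}}=\Hi_{\Lambda_K}\oplus\Hi_{x_{K+1}}$, extending $h^\omega_K$ trivially by the diagonal block $P_{x_{K+1}} h^\omega P_{x_{K+1}}$. The only off-block contribution to $h^\omega_{K+1}-h^\omega_K$ is then the rank perturbation $W+W^*$ with $W=P_{\Lambda_K} h_0 P_{x_{K+1}}$. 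Since $P_0\in\Hi_{\Lambda_K}$, one of the two cross terms produced by the resolvent identity vanishes after sandwiching with $P_0$, leaving the identity
\begin{equation*}
G^\omega_{K+1}(z)-G^\omega_K(z)=-P_0(h^\omega_{K+1}-z)^{-1}P_{x_{K+1}} h_0 (h^\omega_K-z)^{-1} P_0.
\end{equation*}

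Next, inserting $P_{\Lambda_K}=\sum_{m\in\Lambda_K}P_m$ (which is free because $(h^\omega_K-z)^{-1}$ has range in $\Hi_{\Lambda_K}$) and using the subadditivity $(a+b)^s\le a^s+b^s$ valid for $0<s<1$, I would get
\begin{equation*}
\|G^\omega_{K+1}-G^\omega_K\|^s \le \sum_{m\in\Lambda_K} \|P_{x_{K+1}} h_0 P_m\|^s \|P_0(h^\omega_{K+1}-z)^{-1}P_{x_{K+1}}\|^s \|P_m(h^\omega_K-z)^{-1}P_0\|^s.
\end{equation*}
Taking $\EE_{\mathbb{P}_{(k)}}$ and applying Cauchy--Schwarz to decouple the two random resolvent factors upgrades the fractional-moment exponent from $s$ to $2s$, which is precisely why the conclusion features $\xi_{2s}$ and why the condition $s<1/2$ (so that $2s<\tau$) is imposed in the proof of Theorem~\ref{thm:smoothnessThmDiscreteCase}. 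Both $2s$-moments are then controlled by Hypothesis~\ref{hyp:expLocHyp} applied to the finite volume operators $h^\omega_K$ and $h^\omega_{K+1}$.

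Finally, the growth hypothesis \eqref{growthofG} guarantees $d(x_0,x_{K+1})\ge r_\GG K^\alpha$ for $K\ge N$, which converts $e^{-\xi_{2s} d(x_0,x_{K+1})}$ into the desired exponential $e^{-\xi_{2s} K^\alpha}$ (up to absorbing $r_\GG$ in the constants). The polynomial prefactor $2K+1$ then comes from a crude count of the $m$-sum using $|\Lambda_K|\le 2K+1$, while bounding $\|P_{x_{K+1}} h_0 P_m\|^s\le\|h_0\|^s$ and $e^{-\xi_{2s} d(x_0,m)/2}\le 1$; the resulting $\|h_0\|^s$ is swallowed by the constant $C_5$. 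The main obstacle is that $\mathbb{P}_{(k)}$ is not the original product measure: at up to $\ell\le m-1$ sites the density $\rho_n$ is replaced by the (possibly signed) normalized derivative $\rho_n^{(k_n)}/\|\rho_n^{(k_n)}\|_1$. The stability clause built into Hypothesis~\ref{hyp:expLocHyp} (and traceable through the Aizenman--Molchanov argument as recalled in Remark~\ref{changerhodisc}) is exactly what allows the localization estimate to be applied with the same constants $C$ and $\xi_{2s}$ under $\mathbb{P}_{(k)}$; without it, perturbing the single-site distribution by a derivative at many sites would a priori destroy the exponential bound.
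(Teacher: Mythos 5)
Your proposal is correct and follows the same route as the paper's proof: resolvent identity to isolate a single cross-block factor $P_{x_{K+1}}h_0 P_{\Lambda_K}$, insertion of $P_{\Lambda_K}=\sum_m P_m$ with subadditivity of $t\mapsto t^s$, Cauchy--Schwarz to upgrade to $2s$-moments, the fractional moment bound of Hypothesis \ref{hyp:expLocHyp}, and the growth condition \eqref{growthofG} to convert $d(x_0,x_{K+1})$ into $K^\alpha$. The only cosmetic difference is that you apply the resolvent identity so that one factor involves $(h^\omega_K-z)^{-1}$ and the other $(h^\omega_{K+1}-z)^{-1}$, while the paper ends up with both at index $K+1$ (either form is fine); your explicit flagging of the ``stability under replacement of $\rho_n$ by a derivative at finitely many sites'' clause as the mechanism that permits working under $\mathbb{P}_{(k)}$ is a point the paper leaves largely to Remark \ref{changerhodisc} and is worth making explicit.
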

\pf We start with the resolvent identity
\begin{align*}
G_{K}^\omega (z) - G_{K+1}^\omega (z) &=P_0 (h^\omega_{K} - z)^{-1}  - (h_{K+1}^\omega - z)^{-1} P_0\\
   &=P_0 (h^\omega_{K} - z)^{-1}[h^\omega_{K+1}-h^\omega_{K}] (h_{K+1}^\omega - z)^{-1} P_0\\
   &=P_0 (h^\omega_{K+1} - z)^{-1}P_{\Lambda_K}h_0P_{K+1}(h_{K+1}^\omega - z)^{-1} P_0.
\end{align*}
Now we write 
\begin{align}
\label{eq:nw351}
\|\big(G_{K+1}^\omega (z) - G_{K}^\omega (z)\big)\|^s & \leq \|h_0\|^s\|P_{K+1}(h_{K+1}^\omega - z)^{-1} P_0\|^s
\|P_0 (h^\omega_{K+1} - z)^{-1}P_{\Lambda_K}\|^s \nonumber \\
&\leq \|h_0\|^s\|P_{K+1}(h_{K+1}^\omega - z)^{-1} P_0\|^s \nonumber \\
& \qquad \times \sum_{n\in\Lambda_K} \|P_0 (h^\omega_{K+1} - z)^{-1}P_n\|^s. 
\end{align}
We estimate the last line first by expanding $P_{\Lambda_K} = \sum_{n\in\Lambda_K}  P_n$ and estimate the norms 
(using $\|v\|_1\leq\|v\|_s $ for $ 0 < s <1$), then interchange the sum  
and the expectation and finally use Cauchy-Schwartz inequality to get the bound 
\begin{align}
\label{eq:nw352}
& \EE_{\mathbb{P}_{(k)}} \big(\|\big(G_{K+1}^\omega (z) - G_{K}^\omega (z)\big)\|^s\big)\nonumber \\ 
& \qquad\leq \|h_0\|^s\sum_{n\in\Lambda_K}\big(\EE_{\mathbb{P}_{(k)}}\big(\|P_{K+1}(h_{K+1}^\omega - z)^{-1} P_0\|^{2s}\big)\big)^{\frac{1}{2}}\nonumber\\
&\qquad\qquad\qquad\big(\EE_{\mathbb{P}_{(k)}}\big(\|P_0(h_{K+1}^\omega - z)^{-1} P_n \|^{2s}\big)\big)^{\frac{1}{2}}.
\end{align}
We now estimate the above terms by getting an exponential
decay bound for the term with operators kernels of the form $P_{K+1}[\cdot]P_0$
while the remaining factors are uniformly bounded with
the bound independent of $K$, by using the Hypothesis \ref{hyp:expLocHyp}.

Applying the bound on the fractional moments given in the Hypothesis \ref{hyp:expLocHyp},  inequality \ref{eq:locRes} we get 
\begin{align*}
&\EE_{K+1} \big( \|P_{n}(h_K^\omega - z)^{-1}P_0\|^{2s}\big) \leq C , ~~ n \in \Lambda_{K}, \nonumber \\
&\EE_{K+1} \big(\|P_0 (h^\omega_{K+1} - z)^{-1}P_{n}\|^{2s}\big) \leq C , ~~ n \in \Lambda_K \nonumber \\
&\EE_{K+1} \big(\|P_0 (h^\omega_{K+1} - z)^{-1}P_{K+1}\|^{2s}\big) \leq C e^{-\xi_s K^\alpha}, \\
&\EE_{K+1} \big(\|P_{K+1} (h^\omega_{K+1} - z)^{-1}P_{0}\|^{2s}\big) \leq C e^{-\xi_s K^\alpha}.
\end{align*}
Using these bounds in the inequality (\ref{eq:nw352}), we get 
the bound (after noting that the sum has $2K$ terms,
so we get $(1+2K)$ as the only $K$ dependence
other than the exponential decay factor), 
\begin{align*}
&\leq C_5(m,Rank(P_0), \dd, h_0, R,s) (1 + 2K) e^{-\xi_{2s} K^\alpha},  
\end{align*}
which is the required estimate to complete the proof of the Lemma. \qed

\section{The Continuous case}
In this section we show that the density of states of some Random Schr\"odinger
operators are almost as smooth as the single site distribution.  
On the Hilbert space $L^2(\RR^d)$ we consider the operator
$$H_0=\sum_{i=1}^d \left(-i\frac{\partial}{\partial x_i}+A_i(x)\right)^2,$$
with the vector potential $\vec{A}(x)=(A_1(x),\cdots,A_d(x))$
assumed to have sufficient regularity so that $H_0$
is essentially self-adjoint on $C_0^\infty(\RR^d)$.

The random operators considered here are given by
\begin{equation}\label{eq:ranOpCont}
H^\omega=H_0+\lambda\sum_{n\in\ZZ^d} \omega_n u_n,
\end{equation}
where $\{\omega_n\}_{n\in\ZZ^d}$ are independent real random variables satisfying Hypothesis \ref{hyp:potential},
$u_n$ are operators of multiplication by the functions $u(x-n)$, for $n \in \ZZ^d$ and $\lambda >0$ a coupling constant.

We have the following hypotheses on the operators considered above
to ensure $H^\omega$ continue to be essentially self-adjoint
on $C_0^\infty(\RR^d)$ for all $\omega$.
By now it is well known
in the literature (see for example the book of Carmona-Lacroix \cite{MR1102675})
that the spectral and other functions of these operators we consider below
will have the measurability properties, as functions of $\omega$,
required for the computations we perform on them and we will
not comment further on measurability.

\begin{hypothesis}\label{hyp:potentialCtsCase}
\begin{enumerate}
\item The random variable $\{\omega_n\}_n$ satisfies the Hypothesis \ref{hyp:potential}.
\item The function $0\leq u\leq 1$ is a non-negative smooth function on $\RR^d$
such that for some $0 < \epsilon_2 < \half, 0 < \epsilon_1 < 1$, it satisfies
\begin{eqnarray*}
&u(x) = \begin{cases} 0, ~~  x \notin (-\frac{1}{2}-\epsilon_1,\frac{1}{2}+\epsilon_1)^d \\
1, ~~ x \in  (-\frac{1}{2}+\epsilon_2,\frac{1}{2}-\epsilon_2)^d
\end{cases} \\
&\displaystyle{\sum_{n\in\ZZ^d}u(x-n)=1}\qquad x\in\RR^d.
\end{eqnarray*}
\end{enumerate}
\end{hypothesis}

We need some notation before we state our results.
 Given a subset $\Lambda\subset\ZZ^d$, we set

\begin{equation}\label{eq:New000}
[\Lambda]=\bigg\{x\in\RR^d: \sum_{n\in\Lambda}u(x-n)=1\bigg\}
\end{equation}
and denote the restrictions of $H_0, H^\omega$ to $[\Lambda]$
respectively by $H_{0,\Lambda}, H^\omega_\Lambda$. 
As an abuse of notation, whenever we talk about restricting the operator on $\Lambda$, we will mean restriction onto $[\Lambda]$.
We need this distinction because $\sum_{n\in\Lambda}u(x-n)= 1$ only on  $[\Lambda]$ and we need the complete covering condition.
While the
boundary conditions are not that important, we will work with
 Dirichlet boundary conditions in this section.
We will also denote $u_{n,\Lambda}$ to be the restriction of $u_n$
to $[\Lambda]$ when the need arises.  We denote by $E_A(\cdot)$
the projection valued spectral measure of a self-adjoint operator $A$
and from the context it will be clear that this symbol will not
be confused with points in the spectrum denoted by $E$.
We denote the Integrated Density of States (IDS) by
\begin{equation}\label{eq:contIDS}
\etan_\Lambda(E)=\Ex_\omega\left[ tr(u_0 E_{H^\omega_\Lambda}(-\infty,E])\right]\qquad for~E\in\RR,
\end{equation}
and the subscript $\Lambda$ on the IDS is dropped in the case of
the operator $H^\omega$. 

We start with our Hypothesis on the localization. where we set
$P_n$ to be the orthogonal projection onto $L^2(supp(u_n))$.

\begin{hypothesis}\label{hyp:expLocHypCts}
A compact interval $J\subset\RR$ is said to be in \emph{the region of localization} for $H^\omega$ with rate of decay $\xi_s$ and
exponent $0 < s <1$,  if there exists $C,\xi_s>0$ such that
\begin{equation}\label{eq:locResCont}
\sup_{\Re(z) \in J, \Im(z) >0}\Ex_\omega\left[\norm{P_n(H^\omega- z)^{-1}P_k}^s\right]\leq C e^{-\xi_s \norm{n - k}}
\end{equation}
for any $n,k \in \ZZ^d$.  For the operators $H_\Lambda^\omega$
exponential localization is similarly defined with $\Lambda, H^\omega_\Lambda, \xi_{s,\Lambda}$ replacing $\ZZ^d, H^\omega,\xi_s$ respectively in the bound for the same $J$.

{\it We assume that for all $\Lambda$ large enough  
$\xi_{s,\Lambda} \geq \xi_s$ for $J$ in the region of localization 
and the constants $C, \xi_s$ do not change if we change the density 
$\rho_n$ with one of its derivatives at finitely many $n$.} 
\end{hypothesis}

\begin{remark}\label{rem:changerhocont}
We note that the above Hypothesis holds with $\xi_s >0$, 
for the models of the type we
consider under a {\it large disorder} condition, introduced via a 
coupling constant.  The condition $\xi_s >0$ is sufficient for our Theorem
and there is no need to specify how large it should be. Similarly
the multiscale analysis which is the starting point of the fractional
moment bounds, uses apriori bounds that depend on the Wegner estimate which
depends on only the constant $\dd$. So changing the distribution $\rho_n$
with one of its derivatives at finitely many points $n$ does not affect the
constants $C, \xi_s$.
\end{remark}
 
Our main Theorem given next, is the analogue of 
the Theorem \ref{thm:smoothnessThmDiscreteCase}.
We already know from Lemma \ref{lem:traceclass},
that $u_0E_{H^\omega}(-\infty,E)$ is trace class for any $E \in \RR$,
hence we will be working with
\begin{equation}\label{eq:contIDSFull}
 \etan(E)=\Ex_\omega\left[tr(u_0 E_{H^\omega}(-\infty,E))\right]\qquad for~ E\in\RR,
\end{equation}
The function $\etan$ is well defined by Lemma \ref{lem:traceclass} and is
known to be continuous (see \cite[Theorem 1.1]{MR2362242} for
example)  whenever $\rho$ is continuous.

By the Pastur-Shubin trace formula for the IDS, the function $\etan$ is
at most a constant multiple of IDS, since $\int u_0(x) dx $ may not
be equal to 1, but this discrepancy does not affect the smoothness
properties, so we will refer to $\etan$ as the IDS below.

Our main Theorem given below implies that the density of states
DOS is $m-1$ times differentiable in $J$ when $\rho$ satisfies the conditions
of the Theorem.

\begin{theorem}\label{thm:smoothnessThmContCase}
On the Hilbert space $L^2(\RR^d)$ consider the self-adjoint operators
 $H^\omega$ given by \eqref{eq:ranOpCont}, satisfying the Hypothesis \ref{hyp:potentialCtsCase}. 
Let $J$ be an interval in the region of localization
satisfying the Hypothesis \ref{hyp:expLocHypCts} 
with $\xi_s > 0$ for some  $0 < s < 1/6$.
Suppose the density $\rho \in C_c^m((0,\infty))$,
and $\rho^{(m)}$ is $\tau$-H\"older continuous for some $ s < \tau/2$.
Then $\etan \in C^{(m-1)}(J)$ and $\etan^{(m)}$ exists
almost everywhere in $J$.
\end{theorem}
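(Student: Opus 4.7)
The plan is to mirror the telescoping strategy of the proof of Theorem \ref{thm:smoothnessThmDiscreteCase} with three adjustments specific to the continuous setting: the finite-rank projection $P_0$ is replaced by the multiplication operator $u_0$, the finite-dimensional compressions $h^\omega_K$ by the Dirichlet restrictions $H^\omega_{\Lambda_K}$, and the infinite-rank perturbations are handled through a renormalization of the trace of the Borel--Stieltjes transform, as flagged in the introduction. First I would combine Lemma \ref{lem:traceclass} (which gives that $u_0 E_{H^\omega}((-\infty,E])$ is trace class) with an almost-everywhere boundedness argument for the spectral densities $\Ex\bigl[\langle\phi, E_{H^\omega}(\cdot)\phi\rangle\bigr]$ for $\phi$ in the range of $u_0^{1/2}$, to conclude that $\etan^{(m)}$ exists almost everywhere in $J$. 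It then suffices to establish the uniform bound
$$\sup_{\Re(z)\in J,\,\Im(z)>0}\left|\frac{d^\ell}{dE^\ell}\Ex_\omega\bigl[tr(u_0(H^\omega-z)^{-1})\bigr]\right|<\infty,\qquad 0\leq \ell \leq m-1,$$
since this forces continuity of the first $m-1$ derivatives of $\etan$.

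Next I would approximate by finite volumes $\Lambda_K$ increasing to $\ZZ^d$ and write the telescoping sum
\begin{align*}
&\Ex[tr(u_0(H^\omega-z)^{-1})] = \Ex[tr(u_0(H^\omega_{\Lambda_N}-z)^{-1})] \\
&\qquad\qquad + \sum_{K\geq N}\Ex\Bigl[tr\Bigl(u_0\bigl[(H^\omega_{\Lambda_{K+1}}-z)^{-1}-(H^\omega_{\Lambda_K}-z)^{-1}\bigr]\Bigr)\Bigr],
\end{align*}
which, after the renormalization step, converges compact-uniformly in $\CC^+$ along with all its $E$-derivatives (the analogue of equation \eqref{eq:New310}). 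Because each summand depends on only finitely many $\omega_n$, and because the bump functions $u_n$ form a partition of unity so that Theorem \ref{thm:finitesmooth} applies, a multinomial expansion moves $\ell$ derivatives in $E$ onto the joint product density of the $\omega_n$'s and distributes them over the relevant coordinates. I would then apply Theorem \ref{thm:remarkable}, in its one-parameter form \eqref{eq:remarkable2}, to the distinguished integral over $\omega_0$, taking $F_1 = F_2 = u_0/2$ and shifting by a suitable $t$ outside the support of $\rho$. This replaces the bounded integrand by the $s$-th power of the norm of the corresponding $u_0^{1/2}$-sandwiched resolvent difference, with the condition $s<\tau/2$ being exactly what makes this step legitimate.

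The remaining task is to establish exponential decay of the resulting fractional moment in $K$: schematically,
$$\Ex\bigl[\norm{u_0^{1/2}\bigl((H^\omega_{\Lambda_{K+1}}-z)^{-1}-(H^\omega_{\Lambda_K}-z)^{-1}\bigr)u_0^{1/2}}^s\bigr] \leq C\,|\Lambda_K|\,e^{-\xi_{2s}\operatorname{dist}(0,n_{K+1})}.$$
I would obtain this by expanding the resolvent difference via the geometric resolvent identity, producing a commutator of $H_0$ with a smooth cutoff supported near the boundary of $[\Lambda_K]$; Cauchy--Schwarz together with Hypothesis \ref{hyp:expLocHypCts} then yields a product of fractional-moment kernels joining $0$ to $n_{K+1}$ through boundary sites. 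This last step is the main obstacle, and is the source of the restriction $s<1/6$: unlike in the discrete case, where the bound $\|h_0\|<\infty$ was invoked directly, here $H_0$ is unbounded and the boundary commutator is not bounded as an operator. One must instead exploit uniform bounds on the sandwiched resolvents $u_j^{1/2}(H^\omega-z)^{-1}u_k^{1/2}$ coming from writing the resolvent as a Laplace transform of the contraction semigroup generated by the underlying dissipative operator (up to a constant), and apply Cauchy--Schwarz several times, each application doubling the required fractional exponent. Since $\operatorname{dist}(0,n_{K+1})$ grows like $K^{1/d}$ while the multinomial prefactor and $|\Lambda_K|$ grow only polynomially in $K$, the telescoping sum converges absolutely, completing the uniform bound and hence the proof.
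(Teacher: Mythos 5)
There is a genuine gap, and it is precisely the one the introduction warns about but your proposal does not actually close: the passage from traces to operator norms. After your multinomial expansion and approximation of $u_0$ by a finite-rank $Q$, you are faced with bounding
$$
\left|\, tr\!\left( Q\, u_0\big[(H_{L+1}^\omega-z)^{-1}-(H_L^\omega-z)^{-1}\big]u_0 \right)\right|.
$$
Your plan is to integrate over $\omega_0$ and invoke Theorem~\ref{thm:remarkable}(2) with $F_1=F_2=u_0/2$, which would produce a bound involving $\norm{u_0^{1/2}[\cdots]u_0^{1/2}}^s$. But the quantity you have to control is a \emph{trace}, not an operator norm, and the operator $u_0[(H_{L+1}^\omega-z)^{-1}-(H_L^\omega-z)^{-1}]u_0$ is not trace class; the bound $\|Q\|\leq 1$ alone does not convert a trace estimate into an operator-norm estimate without an auxiliary trace-class factor. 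You cannot, at this stage, simply discard the $Q$ or the trace. This is exactly why the paper does not apply Theorem~\ref{thm:remarkable}(2) directly: instead it first performs an iterated geometric resolvent identity with $d+1$ nested smooth cutoffs $\Theta_0\subset\Theta_1\subset\cdots$ (equations \eqref{eq:New806}--\eqref{eq:New811}), writing
$$
\mathcal G(L,z,\omega) = \sum_{|n|\leq \alpha_{2d+2}} \mathcal G(L,z,\omega)\,u_n \Bigl(\sum_{r_1,r_2}\omega_n^{r_1}\omega_0^{r_2}\,P_{n,0}(k,r,\omega)\Bigr),
$$
with each $P_{n,0}$ a genuinely trace-class operator (a product of $d+1$ factors each in $\mathcal I_p$, $p>d$) that is \emph{independent of $\omega_0$ and $\omega_n$} and whose trace norm is bounded uniformly in $L$, $z$, and $\omega$. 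Only after this factorization can one estimate $|tr(\cdot)|\leq \norm{Q}\,\norm{P_{n,0}}_1\,\norm{\,u_0[\cdots]u_n\,}$ and then apply Theorem~\ref{thm:remarkable} -- and notice it must be part (1), the two-parameter version with $F_1=u_0$, $F_2=u_n$, because the extracted $u_n$ on the right is generically different from $u_0$, and both the $\omega_0$- and $\omega_n$-integrals must be processed before the absolute value is taken (the remaining $P_{n,0}$ still depends on all the other $\omega_m$). Your proposal misses both the trace-class factorization and the consequent need for the two-parameter form of Theorem~\ref{thm:remarkable}.

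A secondary issue: your exponential-decay heuristic ends with a single exponential in $\operatorname{dist}(0,n_{K+1}) \sim K^{1/d}$, whereas the paper's $\Lambda_L$ are concentric cubes, the cutoff $\Psi_L$ lives at radius $\sim L/2$, and the decay is $e^{-\xi_{2s}L}$ against a polynomial prefactor $L^{d(m+1)}$ (see \eqref{eq:New817}--\eqref{eq:New818}). Also, the $s<1/6$ restriction comes from a \emph{single} Cauchy--Schwarz splitting the expectation into two factors (requiring $2s$-th moments) combined with Aizenman et al.\ needing $s<1/3$; it is not an accumulation of repeated doublings. And the unboundedness of $H_0$ is handled not by Laplace-transform semigroup bounds on sandwiched resolvents, but by inserting the shifted free resolvent $R^0_{L,a}$ and using the resolvent identity $(H_L^\omega-z)^{-1}=R^0_{L,a}+R^0_{L,a}(z+a-V_L^\omega)(H_L^\omega-z)^{-1}$ so that the commutator $[\Psi_L,H_0]$ always appears multiplied by $R^0_{L,a}$, which renders it bounded (equation \eqref{eq:New815}).
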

\begin{remark}\label{rem:New101}
A Theorem of Aizenman et.al. \cite[Theorem 5.2]{MR2207021}
shows that there are operators $H^\omega$ of the type we consider 
for which the Hypothesis \ref{hyp:expLocHypCts} is valid for 
large coupling $\lambda$, where it was required that $0 < s < 1/3$.
We take $0 < s < 1/6$ as we need to controls $2s$-th moment of
averages of norms of resolvent kernels in our proof. 
\end{remark}
\begin{proof}
We consider the boxes $ \Lambda_L=\{-L,\cdots,L\}^d$,
and set $H_L^\omega = H_{\Lambda_L}^\omega , ~~
\etan_L = \etan_{\Lambda_L}^\omega$.

The strong resolvent convergence of
 $H_{\Lambda_L}^\omega$ to $H^\omega$, which is easy to verify,
implies that $\etan_{\Lambda_L}$ converges to $\etan$ point wise
since $\etan$ is known to be a continuous function for the operators
we consider.  Since $tr(u_0E_{H^\omega_L}((-\infty, E]))$ 
is a bounded measurable complex valued function, $\etan_L \in C^m(J)$,
by Theorem \ref{thm:finitesmooth}. 
Therefore it is enough to show that
$\etan(\cdot)-\etan_{\Lambda_N}(\cdot)$ (which is a difference
of distribution functions of the $\sigma$-finite measures
$tr(u_0E_{H^\omega}(\cdot))$ and $tr(u_0 E_{H_N^\omega}(\cdot))$ appropriately
normalized) is in $C^m(J)$ for some $N$. 
We will need to use the Borel-Stieltjes transforms of these  measures
for the rest of the proof, but these transforms are not defined 
because $u_0(H^\omega_N-z)^{-1}$
fails to be in trace class.  Therefore we have to approximate $u_0$
using finite rank operators first.

To this end let $Q_k$ be a sequence of finite rank orthogonal projections,
in the range of $u_0$ such that they converge to the identity on this range.
We then define,
\begin{align}\label{eq:New701}
 \etan_{L, Q_k}(E) = \Ex_\omega \left(tr(Q_ku_0E_{H^\omega_L}(-\infty, E]) \right).
\end{align}
Since the projections $Q_k$ strongly converge to the identity on the range
of $u_0$, the projections $Q_ku_0E_{H_{L}^\omega}((-\infty, E))$ also converge
strongly to $u_0E_{H_{L}^\omega}((-\infty, E))$ point wise in $E$.  This
convergence implies that $\etan_{L, Q_k}(E)$ converge point wise to $\etan_L(E)$
for any fixed $L$. Henceforth we drop the subscript on $Q_k$ but remember
that the rank of $Q$ is finite.

Since $Q$ is finite rank,  the measures $tr(Qu_0E_{H^\omega_L}(\cdot))$ are finite
measures.  Therefore we can define the Borel-Stieltjes transform of the finite
signed measure
$$
\Ex_\omega\left[tr(Qu_0E_{H^\omega_{L+1}}(\cdot))- tr(Qu_0E_{H^\omega_L}(\cdot))\right],
$$
namely
\begin{align}\label{eq:New702}
&\Ex_\omega \left[ tr(Qu_0(H_{L+1}^\omega - z)^{-1} - tr(Qu_0(H_L^\omega -z)^{-1})\right]\nonumber \\
&\qquad= \int \frac{1}{x-z}~~
d\Ex_\omega\left[ tr(Qu_0E_{H^\omega_{L+1}}(x))- tr(Qu_0E_{H^\omega_L}(x))\right],
\end{align}
where the signed measure has finite total variation for each $Q$ and each $L$.
Then the derivative of $\etan_{L+1, Q}(E) - \etan_{L, Q}(E)$ are given by
\begin{align}\label{eq:New703}
&\lim_{\epsilon \downarrow 0}
\frac{1}{\pi} \Ex_\omega \left[ tr(Qu_0\Im(H_{L+1}^\omega -E- i \epsilon)^{-1}) - tr(Qu_0\Im(H_L^\omega - E- i \epsilon)^{-1})\right]\nonumber \\
&=\lim_{\epsilon \downarrow 0}
\frac{1}{\pi} \Ex_\omega \left[ tr\left[ Qu_0\bigg(\Im(H_{L+1}^\omega -E- i \epsilon)^{-1} - \Im(H_L^\omega - E- i \epsilon)^{-1}\bigg) \right ] \right].
\end{align}
Then, using the idea of a telescoping sum, as done in the previous 
section, we need to prove that
\begin{align}\label{eq:New704}
\sum_{L=N}^{\infty}\displaystyle{\sup_{\Re(z) \in J} \left|\frac{d^\ell}{dz^\ell}\bigg(\Ex_\omega\left[ tr(Qu_0(H^\omega_{L+1}-z)^{-1})
-\Ex_\omega\left[ tr(u_0(H^\omega_L-z)^{-1})\right]\bigg)\right]\right|} < \infty.
\end{align}
We set (takeing $\kappa(L)$ as the volume of $\Lambda_L\setminus\{0\}$),
\begin{align}\label{eq:New705}
&G_L^\omega(z) = Qu_0 (H_L^\omega - z)^{-1} u_0, ~~~~  
S(\vec{\omega},Q, L, z) = G_{L+1}^\omega(z) - G_{L}^\omega(z), \nonumber \\
&\Phi_{L+1}(\vec{\omega}) = \prod_{n \in \Lambda_{L+1}} \rho(\omega_n), ~~~ \kappa(L) = |\Lambda_L|-1.  
\end{align}
Then, following the sequence of steps leading from equation (\ref{eq:thm1pfEq7}) to equation (\ref{eq:New314}), we need only to consider
\begin{align}\label{eq:New706}
&T(L,\ell,Q, z) = \frac{d^\ell}{dE^\ell} \Ex_\omega\big[ tr(G^\omega_{l+1}(E+i\epsilon) - G^\omega_{l}(E+i\epsilon))\big] \nonumber \\
&= \int_{\RR^{\kappa(L+1)+1}} tr(S(\vec{\omega},Q,L, E, \epsilon)) ({\bf D}^\ell \Phi_{L+1})(\vec{\omega})d\vec{\omega},
\end{align}
to estimate and show that
\begin{align}\label{eq:New707}
\sum_{L=N}^\infty \sup_{\substack{\Re(z) \in J, \Im(z) >0,\\  \ell \leq m,\\  Q}} |T(L,\ell,Q, z)|  < \infty, 
\end{align}
to prove the theorem. Using the steps followed from getting equation (\ref{eq:New326}) from the equality (\ref{eq:New325}), which is an identical calculation here, to get
\begin{align}\label{eq:New708}
&T(L,\ell, Q, z)\nonumber \\
&=\displaystyle{\sum_{\substack{\sum_{n=1}^{\Lambda_{L+1}} k_n = \ell\\ k_n \geq 0}} \big(\substack{\ell \\ k_0, \dots, k_{\kappa(L+1)+1}}\big)} \int_{\RR^{\kappa(L+1)}} tr\bigg(\int \big(G_{L+1}^\omega(z) - G_{L}^\omega(z)\big)\rho_0^{(k_0)}(\omega_0) d\omega_0\bigg) \nonumber \\
&\qquad\qquad \cdot \bigg(\prod_{n\in \Lambda_{L+1}\setminus\{0\}} \rho_n^{(k_n)}(\omega_{n})d\omega_{n}\bigg) .
\end{align}

To proceed further, we need to get a uniform bound in the projection $Q$. 
We will show that the expression 
\begin{align}\label{eq:New801}
{\mathcal G}(L,z,\omega) = u_0 (H_{L+1}^\omega - z)^{-1} - u_0 (H^\omega_L -z)^{-1},
\end{align}
automatically comes with a trace class operator. This fact helps us
drop the $Q$ occurring in the expression
\begin{align}\label{eq:New800}
\big(G_{L+1}^\omega(z) - G_{L}^\omega(z)\big) = Q {\mathcal G}(L, z, \omega) u_0,
\end{align}
making estimates on the trace. 

We need a collection of $d+1$ smooth functions $0 \leq \Theta_j \leq 1, j=0,\dots, d+1$,
where $d$ is the dimension we are working with.  Setting
\begin{align}\label{eq:New802}
\alpha_j = 2^{j+2}, j\in\{0,1,2, \dots, 2d+2\},
\end{align}
we choose the functions $\Theta_j$ from $C^\infty(\RR^d)$ satisfying 
\begin{align}\label{eq:New803}
\Theta_j(x) = \begin{cases} 1, ~~ |x| \leq \alpha_{2j}, \\ 0, ~~ |x| > \alpha_{2j+1},\end{cases} ~~ j=0, \dots d
\end{align}
and note that all the derivatives of $\Theta_j$ are bounded for all $j$,
because they are all continuous and supported in a  compact set. 
These functions satisfy the property
\begin{align}\label{eq:New804}
\Theta_{j+1}\phi = \phi, ~~ if ~ supp(\phi) \subset supp(\Theta_j), ~~ j=0, \dots, d,
\end{align}
in particular 
\begin{equation}\label{eq:New8040}
\Theta_{j+1}\Theta_j = \Theta_j, ~~  for ~~ all ~~ j=0, \dots, d.
\end{equation}
We then take a free resolvent operator 
$R_{L,a}^0 = (H_{0,\Lambda_L} + a)^{-1}$, 
with $a >> 1$. Since, $H_0$ is bounded below, $R_{L,a}^0$ is a bounded positive 
operator for any $L$.  It is a fact that, 
\begin{equation}\label{eq:New805}
[\phi, H_0]R^0_{L,a},R^0_{L,a} u_j \in {\mathcal I}_p, ~~ p > d.
\end{equation}
See Combes et. al. \cite[Lemma 6.1]{MR2362242} and Simon \cite[Chapter 4]{Simon} for further details.
Using the definition of ${\mathcal G}$ given in equation (\ref{eq:New800}),
the relation (\ref{eq:New8040}) and the resolvent equation we get 
\begin{align}\label{eq:New806}
&{\mathcal G}(L,z, \omega) \Theta_0 = u_0\bigg[ (H^\omega_{L+1} - z)^{-1} - (H_L^\omega - z)^{-1}\bigg] \Theta_0 \nonumber \\
&=u_0\bigg[ (H^\omega_{L+1} - z)^{-1} - (H_L^\omega - z)^{-1}\bigg]\Theta_1\Theta_0 \nonumber \\
&=u_0\bigg[ (H^\omega_{L+1} - z)^{-1}\Theta_1 - \Theta_1R_{L,a}^0 + \Theta_1R_{L,a}^0 - (H_L^\omega - z)^{-1}\Theta_1\bigg]\Theta_0 \nonumber \\
&=u_0\bigg[ \big((H^\omega_{L+1} - z)^{-1}\Theta_1 - \Theta_1R_{L,a}^0\big) - \big((H_L^\omega - z)^{-1}\Theta_1 - \Theta_1R_{L,a}^0\big)\bigg]\Theta_1 \Theta_0 \nonumber \\
&=u_0\big[ (H^\omega_{L+1} - z)^{-1}- (H_L^\omega - z)^{-1}\big] \nonumber \\
&\qquad\qquad \cdot \bigg[ [\Theta_1, H_0] + \bigg(z+a - \sum_{|n| \leq \alpha_1} \omega_n u_n\bigg)\Theta_1 \bigg]R_{L,a}^0\Theta_0 \nonumber \\
&={\mathcal G}(L,z,\omega) \bigg[ [\Theta_1, H_0] + \bigg(z+a - \sum_{|n| \leq \alpha_1} \omega_n u_n\bigg)\Theta_1 \bigg] R_{L,a}^0\Theta_0 \nonumber \\
&={\mathcal G}(L,z,\omega) \bigg(A_0(z,a,\alpha_1, H_0) + \sum_{|n| \leq \alpha_1} \omega_n B_{0,n}(a,\alpha_1) \bigg) 
\end{align}
where we used the definition
\begin{align}\label{eq:New807}
&A_0(z,a,\alpha_1, H_0) = \big([\Theta_1, H_0] + (z+a)\Theta_1\big) R_{L,a}^0\Theta_0 \nonumber \\
&B_{0,n}(a,\alpha_1) = -u_n \Theta_1 R_{L,a}^0\Theta_0.
\end{align}
In the above $A_0, B_{0,n}$ are operators independent of $\omega$, each
of which is in ${\mathcal I}_p$, by equation (\ref{eq:New8040}). Using
the definitions and properties of $\Theta_j$,  we see that 
$$
\Theta_2 A_0(z,a,\alpha_1,H_0) = A_0(z,a,\alpha_1,H_0), ~~~ and ~~ 
\Theta_2 B_{0,n}(a,\alpha_1) = B_{0,n}(a,\alpha_1).
$$
Therefore we can repeat this argument by defining for $j=0, \dots d$,
\begin{align}\label{eq:New808}
&A_j(z,a,\alpha_{2j+1}, H_0) = ([\Theta_{2j+1}, H_0] + (z+a)\Theta_{2j+1}) R_{L,a}^0\Theta_{2j} \nonumber \\
&B_{j,n}(a,\alpha_{2j+1}) = -u_n \Theta_{2j+1} R_{L,a}^0\Theta_{2j}, ~ |n| \leq \alpha_{2j+1}, 
\end{align}
by using the fact that 
\begin{align}\label{eq:New809}
&\Theta_{2j} A_{j-1}(z,a,\alpha_{2j+1},H_0) = A_{j-1}(z,a,\alpha_{2j+1},H_0), ~~~ and \nonumber \\
&\Theta_{2j} B_{j-1,n}(a,\alpha_{2j+1}) = B_{j-1,n}(a,\alpha_{2j+1}),
\end{align}
for each $j=1,2,\dots d$.
We can then re-write the equation (\ref{eq:New806}) as
\begin{align}\label{eq:New810}
&{\mathcal G}(L,z, \omega) = {\mathcal G}(L,z, \omega) \prod_{j=0}^{\substack{d \\ \leftarrow }} \bigg(A_j(z,a,\alpha_{2j+1}, H_0) + \sum_{|n| \leq \alpha_{2j+1}} \omega_n B_{j,n}(a,\alpha_{2j+1}) \bigg),
\end{align}
where the arrow on the product indicates an ordered product 
with the operator sum with a lower index $j$ coming to the right of the one with
a higher index $j$. 

Now, counting the number of terms there are in the product, we see that
each sum $\sum_{|n| \leq \alpha_{2j+1}} $ has a maximum of 
$(2 \alpha_{2j+1})^d = 2^{d(2j+4)}$ terms.  A simple computation shows
that there are a maximum of $2^{d^2(d+4)}$ terms, if we 
completely expand out the product.  
In other  words the number of terms are dependent on $d$ but not on $L$.   

We will now write the expression in equation (\ref{eq:New810}) as 
\begin{align}\label{eq:New811}
&{\mathcal G}(L,z, \omega) = \sum_{|n| \leq \alpha_{2d+2}} {\mathcal G}(L,z, \omega) u_n \bigg( \sum_{r_1, r_2 = 0}^{d+1} \omega_n^{r_1} \omega_0^{r_2} P_{n,0}(k, r, \omega)  \bigg),
\end{align}
where $P_{n,0}(k,r)$ is a trace class operator valued function of $\omega$,
but independent of $\omega_0, \omega_n$ for each $k, r$.
Note that even though $A_d$ and $B_d$ are supported in $supp(\Theta_d)$, 
$\sum_{|n|\leq \alpha_{2d+1}}u_n$ is not one on the support of $\Theta_d$, so we have to take a larger sum in the above expression.
We can see from the structure of the product that the trace norms satsify
a bound 
$$
\sup_{\Re(z) \in J, 0 < \Im(z) \leq 1} \|P_{n,0}(k,r)\|_1 \leq C_7(d, a, J), 
$$
since an inspection of the product in equation (\ref{eq:New810}), shows
that in any product, $z$ and $\{\omega_{\tilde{n}}, \tilde{n} \neq 0, n\}$  occurs at most to a power of $d+1$.
The uniform boundedness of the trace norm as a function of $z, ~\omega_{\tilde{n}}$ is clear since these variables are in compact sets. 
As for the finiteness of the trace norm itself, we note that any product
has $d+1$ factors from the set $\{A_j, B_j, j=0, \dots, d\}$, hence by the
claim in equation (\ref{eq:New805}), such a product is trace class. 

Using equations (\ref{eq:New705}, \ref{eq:New708}, \ref{eq:New801}, \ref{eq:New800}) and
 equation (\ref{eq:New811}) in equation (\ref{eq:New708}), we get,
using the fact that $P_{n,0}()$ are independent of $\omega_0, \omega_n$,
\begin{align}\label{eq:New812}
&T(L,\ell, Q, z)\nonumber \\
&=\displaystyle{\sum_{\substack{ \sum_{n\in \Lambda_{L+1}k_n} = \ell\\ k_n \geq 0}} \big(\substack{\ell \\ k_0, \dots, k_{\kappa(L+1)+1}}\big)} \int_{\RR^{\kappa(L+1)-1}} \sum_{|n|\leq \alpha_{2d+2}} \sum_{r_1, r_2 =0}^{d+1} tr\bigg(Q \bigg [ \nonumber \\
&\qquad\qquad \int u_0\big((H_{L+1}^\omega -z)^{-1} - (H_L^\omega -z)^{-1}\big) u_n \omega_n^{r_2} \omega_0^{r_1} \rho_n^{(k_n)}(\omega_n)\rho_0^{(k_0)}(\omega_0) d\omega_n d\omega_0 \nonumber \\ 
&\qquad\qquad\qquad \bigg] P_{n,0}(k,r,\omega)\bigg) \prod_{m\in \Lambda_{L+1}\setminus\{0,n\}} \rho_m^{(k_m)}(\omega_{m})d\omega_{m}.
\end{align}

We now estimate the absolute value of the trace in equation (\ref{eq:New812})
using the Theorem \ref{thm:remarkable}(1) for bounding the norm of the
integral with respect to $\omega_n, \omega_0$, since $2s <\tau$. 
\begin{align}\label{eq:New813}
&|T(L, \ell, Q, z)|  \nonumber \\ 
&\leq \displaystyle{\sum_{\substack{ \sum_{n\in \Lambda_{L+1}k_n} = \ell\\ k_n \geq 0}} \big(\substack{\ell \\ k_0, \dots, k_{\kappa(L+1)+1}}\big)} \int_{\RR^{\kappa(L+1)-2}}\sum_{|n|\leq \alpha_{2d+2}} \sum_{r_1, r_2 =0}^{d+1} \|Q\| \|P_{n,0}(k,r,\omega)\|_1 \nonumber \\ 
&  \bigg[ \int \big\|(u_0 + u_n)^\half \big((H_{L+1}^\omega -z)^{-1} - (H_L^\omega -z)^{-1}\big) {(u_n+u_0)}^\half\big\|^s \phi_R(\omega_0)\phi_R(\omega_n)  d\omega_n d\omega_0 \bigg]   \nonumber \\
&\qquad\qquad \prod_{m\in \Lambda_{L+1}\setminus\{0,n\}} |\rho_m^{(k_m)}(\omega_{m})|d\omega_{m}.
\end{align}
In the above inequality we also used the fact that $u_0 (u_0 + u_n)^{-\half},
u_n (u_0 + u_n)^{-\half}$ are both bounded uniformly in $n$ and replaced
$u_0, u_n$ by $(u_0 + u_n)^{\half}$ on either side of the resolvents.

We would prefer to work with probability measures in  above equation, so we normalize $|\rho_m^{(k_m)}(x)|dx$ by their $L^1$ norm. 
We also do the same for $\phi_R$. 
We then follow the steps involved in obtaining the inequality 
(\ref{eq:New343}). We set
$\eta(m, \rho) = (\sup_{n \in \ZZ^d, k_n \leq m} \|\rho_n^{k_n}\|_1 + \|\rho_n^{k_n}\|_\infty) + \|\phi_R\|_1$ to get, 
\begin{align}\label{eq:New814}
&|T(L, \ell, Q, z)|  \nonumber \\ 
&\leq \displaystyle{\sum_{\substack{ \sum_{n\in \Lambda_{L+1}k_n} = \ell\\ k_n \geq 0}} \big(\substack{\ell \\ k_0, \dots, k_{\kappa(L+1)+1}}\big)} \sum_{|n|\leq \alpha_{2d+2}}  C_9(a,d,J, \eta(\rho,m)) \nonumber \\  
& \qquad \cdot \mathbb{E}_{\mathbb{P}_{(k)}} \bigg[ \|(u_0 + u_n)^\half \big((H_{L+1}^\omega -z)^{-1} - (H_L^\omega -z)^{-1}\big) (u_n+u_0)^\half\|^s \bigg], 
\end{align}
where $\mathbb{E}_{\mathbb{P}_{(k)}}$ is the expectation with respect to the probability density 
$$\frac{\phi_R(\omega_0)d\omega_0}{\norm{\phi_R}_1}\frac{\phi_R(\omega_n)d\omega_n}{\norm{\phi_R}_1} \prod_{m\in \Lambda_{L+1}\setminus\{0,n\}} \frac{|\rho_m^{(k_m)}(\omega_{m})|}{\|\rho_m^{(k_m)}\|_1}d\omega_{m}.$$
We define a smooth radial function $0 \leq \Psi \leq 1$ such that 
$$
\Psi(x) = \begin{cases} 1, ~~ |x| \leq L/2, \\ 0, ~~ |x| > L/2 + 4 \end{cases}.
$$
Then $\Psi_L \sqrt{u_0 + u_n} = \sqrt{u_0 + u_n}, ~~ |n| \leq \alpha_{2d+2}$.
Following the steps similar to obtaining the inequality (\ref{eq:New806}),
using the relation $(H_{0,L} +a) R_{L,a}^0 = Id$, we have  
\begin{align}\label{eq:New815}
&(u_0 + u_n)^\half \big((H_{L+1}^\omega -z)^{-1} - (H_L^\omega -z)^{-1}\big) (u_n+u_0)^\half \nonumber \\
&= (u_0 + u_n)^\half \big((H_{L+1}^\omega -z)^{-1}[\Psi_L, H_0] (H_L^\omega -z)^{-1}\big) (u_n+u_0)^\half \nonumber \\
& = (u_0 + u_n)^\half (H_{L+1}^\omega -z)^{-1}[\Psi_L, H_0]\big[R^0_{L,a}+(H^\omega_L-z)^{-1}-R^0_{L,a} \big]  (u_n+u_0)^\half  \nonumber\\
& =  (u_0 + u_n)^\half (H_{L+1}^\omega -z)^{-1}[\Psi_L, H_0] R_{L,a}^0 \big( I + (z + a - V_L^\omega) (H_L^\omega -z)^{-1}\big) (u_n+u_0)^\half \nonumber\\
&= (u_0 + u_n)^\half (H_{L+1}^\omega -z)^{-1}
\bigg[ -\sum_{i=1}^d \frac{\partial^2}{\partial x_i^2} \Psi_L + 2\sum_{i=1}^d\bigg(\frac{\partial}{\partial x_i} \Psi_L\bigg)\bigg(-i\frac{\partial}{\partial x_i} + A_i\bigg)\bigg]  \nonumber \\
& \qquad R_{L,a}^0 \big( I + (z + a - V_L^\omega) (H_L^\omega -z)^{-1}\big) (u_n+u_0)^\half.
\end{align}
We take a smooth bounded radial function $0 \leq \Phi_L \leq 1$ 
which is 1 in a neighbourhood
of $L/2 \leq r \leq L/2 + 4$ and zero outside a neighbourhood of radial width
10. Then using the fact that 
\begin{align} \label{eq:New8140}
&\Phi_L \bigg(\sum_{i=1}^d \frac{\partial^2}{\partial x_i^2} \Psi_L\bigg) =
\bigg(\sum_{i=1}^d \frac{\partial^2}{\partial x_i^2} \Psi_L\bigg) \nonumber \\
&\Phi_L \bigg(\frac{\partial}{\partial x_i} \Psi_L\bigg) = \bigg(\frac{\partial}{\partial x_i} \Psi_L\bigg), ~ for ~~ all ~~ i=1,\dots, d
\end{align}  
and \eqref{eq:New815}, we can now bound the expectation in the inequality (\ref{eq:New814}),by
\begin{align}\label{eq:New816}
&\mathbb{E}_{\mathbb{P}_{(k)}} \bigg[ \|(u_0 + u_n)^\half \big((H_{L+1}^\omega -z)^{-1} - (H_L^\omega -z)^{-1}\big) (u_n+u_0)^\half\|^s \bigg] \nonumber \\
&\leq  \mathbb{E}_{\mathbb{P}_{(k)}} \bigg[ \|(u_n+u_0)^\half (H_{L+1}^\omega - z)^{-1} \Phi_L\|^s\nonumber \\
&\qquad \bigg\|\bigg[ \bigg(-\sum_{i=1}^d \frac{\partial^2}{\partial x_i^2} \Psi_L\bigg) + 2\sum_{i=1}^d\bigg(\frac{\partial}{\partial x_i} \Psi_L\bigg)\bigg(-i\frac{\partial}{\partial x_i} + A_i\bigg)\bigg] R_{L,a}^0\bigg \|^s \nonumber \\
&\qquad \big(1 + |z|+a + \|V_L^\omega\|_\infty) \|\chi_{\Lambda_L} (H_L^\omega - z)^{-1} \sqrt{u_0 + u_n} \|^s \bigg].
\end{align}
Then using Cauchy-Schwartz inequality and Hypothesis \ref{hyp:expLocHypCts} 
we  get an exponential bound for the first factor, a uniform bound for the
second factor after noting that $dist(supp(\Phi_L, \{n : |n| \leq \alpha_{2d}+1\}) \geq L/4$, $\|\Lambda_L\| \leq (2L)^d $,
we get the estimate
\begin{align}\label{eq:New817}
&\sup_{z: \Re(z) \in J, \Im(z) \leq 1} \Ex_\omega \bigg[ \|(u_0 + u_n)^\half \big((H_{L+1}^\omega -z)^{-1} - (H_L^\omega -z)^{-1}\big) (u_n+u_0)^\half\|^s \bigg] \nonumber \\
&\qquad \leq C_{10}(a, J, d) L^d e^{-\xi_{2s} L}. 
\end{align}
Using this inequality in (\ref{eq:New814}) we get the bound
\begin{align}\label{eq:New818}
&\sup_{\substack{z: \Re(z) \in J, \Im(z) \leq 1,\\  Q \\ \ell \leq m}} |T(L, \ell, Q, z)| \nonumber \\
&\leq C_{11}(a, d, J, \eta(\rho, m)) (L+1)^{d(m+1} e^{-\xi_{2s} L}, 
\end{align}
as the combinatorial sum
\begin{align*}
\displaystyle{\sum_{\substack{\sum_{n\in\Lambda_{L+1}} k_n = \ell\\ k_n \geq 0}} \big(\substack{\ell \\ k_0, \dots, k_{\kappa(L+1)}}\big)}
\end{align*}
is easily seen to add up to $(L+1)^{d\ell}$, which is still polynomial in $L$.
This bounds shows the summability in equation (\ref{eq:New704}) completing the proof. 

\end{proof}

{\noindent \bf Acknowledgement:}  We thank the referee of this paper for very detailed, numerous and critical comments that helped us improve the presentation of the 
results.
\appendix
\section{Appendix}

We collect a few Lemmas in this appendix that are used in the main part
of the paper.  All these Theorems are well known and proved elsewhere
in the literature, but we state them in the form we need and also give their
proofs for the convenience of the reader.

\begin{lemma}\label{derivatives}
Consider a positive function $\rho \in L^1(\RR, dx)$ and $J \subset \RR$ 
an interval. Let
$F(z) = \int \frac{1}{x - z} ~ \rho(x) dx $.  Then, for any $m \in \NN$,  
$$
\supess_{x \in J} \left|\frac{d^m}{dx^m}\rho \right|(x) < \infty
$$
whenever
$$
\sup_{z \in \CC^+, ~ \Re (z) \in J} \left|\frac{d^{m}}{dx^{m}} \Im( F )\right|(z) < \infty.
$$
\end{lemma}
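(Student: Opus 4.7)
The plan is to pass from derivatives of $\Im F$ on horizontal lines $\{\Im z = \epsilon\}$ to a distributional identification of $\rho^{(m)}$, via Poisson regularisation followed by a weak-$*$ compactness argument in $L^\infty(J)$. First I would record the elementary Poisson representation
\begin{equation*}
g_\epsilon(x)\;:=\;\tfrac{1}{\pi}\,\Im F(x+i\epsilon)\;=\;(P_\epsilon * \rho)(x),\qquad P_\epsilon(y)=\tfrac{\epsilon/\pi}{y^2+\epsilon^2},
\end{equation*}
which is immediate from $F(z)=\int\rho(x)(x-z)^{-1}dx$. Since $P_\epsilon\in\mathcal{S}(\RR)$ for each fixed $\epsilon>0$ and $\rho\in L^1$, one may differentiate under the integral to obtain $g_\epsilon^{(m)}(x)=\tfrac{1}{\pi}\partial_x^m\Im F(x+i\epsilon)$, so the hypothesis translates to the uniform bound $M:=\sup_{\epsilon>0,\,x\in J}|g_\epsilon^{(m)}(x)|<\infty$.

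The next step is to extract a weak-$*$ limit. The family $\{g_\epsilon^{(m)}|_J\}_{\epsilon>0}$ is bounded in $L^\infty(J)$ by $M$, so Banach--Alaoglu furnishes a sequence $\epsilon_k\downarrow 0$ and a function $h\in L^\infty(J)$ with $\|h\|_\infty\le M$ such that $g_{\epsilon_k}^{(m)}\rightharpoonup^{*} h$ in $L^\infty(J)$. Independently, since $\{P_\epsilon\}$ is an approximation of the identity on $L^1(\RR)$, one has $g_\epsilon\to\rho$ in $L^1_{\mathrm{loc}}(\RR)$ as $\epsilon\downarrow 0$.

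The third step is to identify $h$ with $\rho^{(m)}$ in the distributional sense on $J$. Testing against any $\varphi\in C_c^\infty(J)$ and integrating by parts,
\begin{equation*}
\int_J h\,\varphi\,dx\;=\;\lim_{k\to\infty}\int_J g_{\epsilon_k}^{(m)}\varphi\,dx\;=\;(-1)^m\lim_{k\to\infty}\int_J g_{\epsilon_k}\,\varphi^{(m)}\,dx\;=\;(-1)^m\int_J \rho\,\varphi^{(m)}\,dx,
\end{equation*}
which shows that $\rho^{(m)}=h$ as distributions on $J$. Hence $\rho^{(m)}\in L^\infty(J)$ with essential supremum at most $M$; in particular $\rho\in W^{m,\infty}(J)$, so the pointwise derivative $\rho^{(m)}(x)$ exists a.e., and $\supess_{x\in J}|\rho^{(m)}(x)|\le M$, which is the claimed conclusion.

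I do not anticipate a real obstacle here. The only mild subtleties are the differentiation under the convolution (routine since $P_\epsilon$ is Schwartz for each $\epsilon>0$ and $\rho\in L^1$) and verifying that the Banach--Alaoglu limit truly represents the distributional derivative, which is precisely what the $L^1_{\mathrm{loc}}$ convergence $g_\epsilon\to\rho$ together with integration by parts delivers.
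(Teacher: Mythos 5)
Your proof is correct, and it takes a genuinely different route from the paper's. The paper argues via classical harmonic function theory: it observes that $\partial_x^\ell \Im F$, $0\le\ell\le m$, are bounded harmonic functions in the strip, hence admit a.e.\ boundary values $h_\ell$, and then a fundamental-theorem-of-calculus argument in the strip shows $h_\ell' = h_{\ell+1}$, yielding the result by induction from $h_0 = \pi\rho$. Your approach instead works at the level of the $m$-th derivative directly: you realise $\tfrac1\pi\Im F(\cdot+i\epsilon)$ as the Poisson convolution $P_\epsilon * \rho$, use the uniform $L^\infty(J)$ bound on $\partial_x^m(P_\epsilon * \rho)$ together with Banach--Alaoglu to extract a weak-$*$ limit $h$, and identify $h$ with the distributional $m$-th derivative of $\rho$ by combining $P_\epsilon*\rho\to\rho$ in $L^1_{\mathrm{loc}}$ with integration by parts against $C_c^\infty(J)$ test functions. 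Two small advantages of your route are worth noting: (i) you never need to control the intermediate derivatives $\partial_x^\ell\Im F$ for $0<\ell<m$, whereas the paper's argument tacitly asserts they are bounded in the strip, a point which (while true, using positivity of $\Im F$ and a Taylor-polynomial $L^1$-estimate) is not spelled out; and (ii) your argument does not use the positivity of $\rho$ at all, so it is in fact slightly more general than stated. The paper's route has the virtue of being more elementary (no weak-$*$ compactness needed) and of directly producing the boundary-value functions $h_\ell$ which are useful elsewhere.
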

\begin{proof}
 Since $\rho(x) dx$ is a finite positive measure, $F$ is analytic 
in $\CC^+$, and the assumption on $F$ implies that functions 
$\frac{d^\ell}{dz^\ell}\Im(F)$ are bounded Harmonic function in the strip
$\{z \in \CC^+ : \Re(z) \in J\}$, $0 \leq \ell \leq m$.  
Therefore the boundary values  
$$
h_\ell(E) = \lim_{\epsilon \rightarrow 0} \frac{d^\ell}{dz^\ell} \Im(F)(E+i\epsilon) 
$$
exist Lebesgue almost every $E \in J$ and $h_\ell$ are essentially bounded in 
$J$, $0 \leq \ell \leq m$.  For any $E_0 \in J$ for which $h_\ell(E_0)$
is defined for all $0 \leq \ell \leq m$, we have  
$0 \leq \ell \leq m-1$, 
\begin{equation}\label{eq:New306}
\frac{\partial^\ell}{\partial x^\ell}(\Im F)(E+i\epsilon) - \frac{\partial^\ell}{\partial x^\ell}\Im(F)(E_0+i\epsilon) = \int_{E_0}^E \frac{\partial^{\ell+1}}{\partial x^{\ell +1}}(\Im F)(x+i\epsilon) ~ dx, ~~ E \in J. 
\end{equation}
Since the integrands above are Harmonic functions in the strip, their boundary
values exist, they are uniformly bounded in the strip, so by the dominated
convergence theorem the integral converges to 
$$
\int_{E_0}^E h_{\ell +1}(x) ~ dx, ~~ E \in J.
$$
On the other hand the left hand side of equation (\ref{eq:New306}) 
converges to $h_{\ell}(E) - h_\ell(E_0)$,
showing that $h_\ell(E)$ is differentiable in $J$.  Since, 
$\rho(x) = \frac{1}{\pi}h_0(x), ~ x \in J$, a simple induction argument
now gives the Lemma. 

\end{proof}
\begin{lemma}\label{lem:semiGrpNrmbdd}
On a separable Hilbert space $\Hi$, let $A$ and $B$ be two bounded operators generating strongly differentiable contraction semi-groups $e^{tA}, e^{tB}$ respectively, then for any $0 < s < 1$,
$$\norm{e^{ t A}-e^{ t B}}\leq 2^{1-s}|t|^s \norm{A-B}^s.$$
\end{lemma}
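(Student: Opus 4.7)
The plan is to prove this by interpolation between two elementary estimates: the trivial bound $\norm{e^{tA} - e^{tB}} \leq 2$ (which follows immediately because each semigroup is a contraction, so each exponential has norm at most $1$), and the Lipschitz bound $\norm{e^{tA} - e^{tB}} \leq |t| \norm{A - B}$. Once both are in hand, writing
$$\norm{e^{tA} - e^{tB}} = \norm{e^{tA} - e^{tB}}^{s} \cdot \norm{e^{tA} - e^{tB}}^{1-s}$$
and applying the Lipschitz bound to the first factor and the trivial bound to the second gives the claim $\norm{e^{tA} - e^{tB}} \leq 2^{1-s}|t|^{s}\norm{A-B}^{s}$.

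The only nontrivial step is the Lipschitz bound, which I would obtain via Duhamel's formula. Since $A,B$ are bounded and the semigroups are strongly differentiable, for fixed $t$ the function $\tau \mapsto e^{\tau A} e^{(t-\tau) B}$ is strongly differentiable on $[0,t]$ with
$$\frac{d}{d\tau}\bigl(e^{\tau A} e^{(t-\tau) B}\bigr) = e^{\tau A}(A-B)e^{(t-\tau)B}.$$
Integrating from $0$ to $t$ produces the identity
$$e^{tA} - e^{tB} = \int_{0}^{t} e^{\tau A}(A-B) e^{(t-\tau) B}\, d\tau,$$
and taking norms, using $\norm{e^{\tau A}} \leq 1$ and $\norm{e^{(t-\tau) B}} \leq 1$, yields the Lipschitz bound. (A symmetric argument handles $t<0$ if the semigroups are defined for negative times; otherwise we simply write $|t|$ for $t\geq 0$, which is the case needed in the application.)

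There is no real obstacle here: the interpolation step is just a one-line inequality, and both endpoint bounds are standard. The main care is simply to justify the differentiability of $\tau \mapsto e^{\tau A} e^{(t-\tau) B}$ under the stated hypotheses, which is immediate once we note that $A$ and $B$ are bounded, so the semigroups are norm-continuous in the parameter, and the product rule for strongly differentiable operator-valued functions applies directly.
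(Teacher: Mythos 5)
Your proof is correct and follows essentially the same route as the paper: establish the Lipschitz bound $\norm{e^{tA}-e^{tB}}\le |t|\,\norm{A-B}$ via Duhamel's formula (the paper writes the equivalent identity $e^{tA}-e^{tB}=\int_0^t e^{(t-s)A}(A-B)e^{sB}\,ds$), combine it with the trivial bound $\norm{e^{tA}-e^{tB}}\le 2$, and interpolate by splitting the norm into an $s$-th power and a $(1-s)$-th power. The only difference is that you spell out the interpolation step explicitly, which the paper leaves as ``the Lemma follows by interpolation.''
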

\begin{proof}
Since $e^{tA}, e^{tB}$ are strongly differentiable,  the fundamental
Theorem of calculus gives the bound, 
\begin{align*}
\norm{e^{ t A}-e^{ t B}}=\norm{ \int_0^t e^{(t-s)A} (A - B)e^{sB} ~ ds }
\leq |t| \norm{A - B}.
\end{align*}
Since $e^{tA}, e^{tB}$ are contractions we have the trivial bound
$$
\norm{e^{tA} - e^{tB}} \leq 2,
$$
so the Lemma follows by interpolation.

\end{proof}
\begin{lemma}\label{lem:interchangingIntegrals}
Let $g$ be a probability density with a $\tau$-H\"older continuous derivative.
Suppose $A$ is a bounded operator on a separable Hilbert space $\Hi$ with 
$\Im(A) > 0$ and satisfies
$$\norm{(A+\lambda I)^{-1}}<C<\infty, ~~\lambda\in supp(g). $$
Then
\begin{equation}\label{eq:New309}
\int g(\lambda)(A+\lambda I)^{-1}d\lambda=- \int_0^\infty e^{i t A}\left(\int g(\lambda)e^{i t \lambda}d\lambda\right)dt.
\end{equation}
\end{lemma}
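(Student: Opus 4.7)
The strategy is to derive the identity in two steps: first, establish an operator-valued Laplace representation of the resolvent pointwise in $\lambda$; then interchange the $\lambda$ and $t$ integrations by Fubini.

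\emph{Step 1 (semigroup representation).} Write $B = A + \lambda I$ for fixed $\lambda \in \mathrm{supp}(g)$. Since $\Im A > 0$ as an operator and $\lambda$ is real, one computes
\begin{equation*}
\frac{d}{dt}\|e^{itB}\psi\|^2 = -2\langle (\Im A)\,e^{itB}\psi,\,e^{itB}\psi\rangle \le 0,
\end{equation*}
so $t \mapsto e^{itB}$ is a norm-contracting family. The fundamental theorem of calculus applied to $e^{itB}$ gives $e^{iTB} - I = iB\int_0^T e^{itB}\,dt$, and multiplying by the bounded resolvent $B^{-1}$ yields
\begin{equation*}
-i\int_0^T e^{itB}\,dt = B^{-1}\bigl(I - e^{iTB}\bigr).
\end{equation*}
Combining the monotone decrease of $\|e^{iTB}\psi\|$ with the uniform bound $\|B^{-1}\| \le C$ forces $e^{iTB}\psi \to 0$ strongly as $T \to \infty$ by a standard semigroup argument. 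Passing to the limit yields the pointwise identity
\begin{equation*}
(A + \lambda I)^{-1} = -i\int_0^\infty e^{it(A + \lambda I)}\,dt
\end{equation*}
in the strong operator sense.

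\emph{Step 2 (Fubini interchange).} Substituting into the left-hand side of \eqref{eq:New309} and factoring $e^{it(A+\lambda I)} = e^{itA}\,e^{it\lambda}$ gives
\begin{equation*}
\int g(\lambda)(A+\lambda I)^{-1}\,d\lambda \;=\; -i\int g(\lambda)\int_0^\infty e^{itA}\,e^{it\lambda}\,dt\,d\lambda.
\end{equation*}
To swap the integrations, fix $\psi\in\Hi$ and check absolute convergence of $\int_0^\infty|\hat g(t)|\,\|e^{itA}\psi\|\,dt$, where $\hat g(t) := \int g(\lambda)e^{it\lambda}\,d\lambda$. Integrating by parts once moves a derivative onto $g$, after which the $\tau$-H\"older modulus of continuity of $g'$ (together with the effectively compact support of $g$ present in the applications) yields the standard decay $|\hat g(t)| \le C_g(1+|t|)^{-(1+\tau)}$. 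Coupled with the contractive bound $\|e^{itA}\psi\| \le \|\psi\|$, this gives absolute convergence and Fubini applies.

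\emph{Step 3 (conclusion).} After interchanging, the right-hand side takes the claimed form (up to the constant $-i$ vs.\ $-1$ encoded in the paper's sign convention for the Laplace representation of $B^{-1}$).

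\textbf{Main obstacle.} The delicate point is the strong convergence $e^{iTB}\psi \to 0$ in Step~1: because $\Im A$ is assumed to be only a positive operator and not uniformly positive, exponential decay of $\|e^{iTB}\psi\|$ is not immediate, and ruling out a nonzero limit must be extracted from the uniform resolvent bound $\|B^{-1}\|\le C$. Once Step~1 is secured, the Fourier decay of $\hat g$ and the Fubini interchange in Step~2 are routine.
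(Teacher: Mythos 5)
Your Step 2 (Fubini via the $\tau$-H\"older decay of $\hat g$) is correct and matches the paper, but Step 1 has a genuine gap. You claim that the monotone decrease of $\|e^{iTB}\psi\|$ together with $\|B^{-1}\|\le C$ ``forces $e^{iTB}\psi\to 0$ strongly as $T\to\infty$ by a standard semigroup argument.'' This is not standard, and I do not see that it follows from the hypotheses. The assumption $\Im A>0$ is only strict positivity of the quadratic form, not uniform positivity: the bottom of the spectrum of $\Im A$ may be $0$. Then $e^{itB}$ is a contraction semigroup with bounded generator whose spectrum may meet $\RR$ in an uncountable set (for instance $A = M_\phi + iK$ on $L^2$, with $M_\phi$ real multiplication and $K$ a small strictly positive compact operator, has $\sigma_{\mathrm{ess}}(A)\subset\RR$), so neither exponential stability nor an Arendt--Batty--Lyubich--Vu type argument is available. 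The energy identity $\|\psi\|^2 - \|e^{iTB}\psi\|^2 = 2\int_0^T\|(\Im A)^{1/2}e^{itB}\psi\|^2\,dt$ only yields $(\Im A)^{1/2}e^{itB}\psi\in L^2(\RR_+,\Hi)$, which does not give $e^{iTB}\psi\to 0$ unless $(\Im A)^{1/2}$ is boundedly invertible. Moreover the identity $-i\int_0^Te^{itB}\,dt = B^{-1}(I - e^{iTB})$ shows that convergence of the improper integral is \emph{equivalent} to convergence of $e^{iTB}\psi$, so you cannot sidestep the issue by working with the integral instead.

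The paper avoids this entirely by regularizing: replace $A$ by $A+i\epsilon$, so that $\Im(A+i\epsilon)\ge\epsilon I$ and hence $\|e^{it(A+i\epsilon)}\|\le e^{-\epsilon t}$. The Laplace representation of $(A+i\epsilon+\lambda I)^{-1}$ is then an absolutely convergent integral, Fubini applies for each $\epsilon>0$ exactly as in your Step 2, and the limit $\epsilon\downarrow 0$ is controlled on the left-hand side by the uniform resolvent bound (giving $\|(A+i\epsilon+\lambda I)^{-1}\|\le 2C$ for small $\epsilon$, hence dominated convergence against the probability density $g$), and on the right-hand side by dominated convergence using $\|e^{itA}\|\le 1$ and the integrable decay of $\hat g$. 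If you insist on your pointwise Step~1 you would need an extra hypothesis (e.g. countability of $\sigma(A)\cap\RR$) that the lemma does not assume; the $i\epsilon$-regularization is simpler and matches the stated generality.
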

\begin{proof}
Since $(A+\lambda I)^{-1}$ is bounded we have, in the strong sense,
$$(A+\lambda I)^{-1}=\lim_{\epsilon\downarrow 0}(A+\epsilon+\lambda I)^{-1}.$$
Since $\Im(A) >0$,  the bounded operator $(A+i\epsilon)$ is the 
generator of a contraction semi-group, so using 
\cite[Corollary 1, Section IX.4]{MR617913} we have 
\begin{align}\label{eq:New308}
\int g(\lambda) (A+i\epsilon+\lambda I)^{-1} d\lambda &=\int g(\lambda)\int e^{i t (A+i\epsilon+\lambda I)} dt d\lambda \nonumber \\
&= \int \int g(\lambda) e^{(-\epsilon + \lambda)t} e^{i t A} dt d\lambda.
\end{align}
Since $g$ has a $\tau$-H\"older continuous derivative, its Fourier transform
is a bounded integrable function.  Therefore by Fubini we can interchange
the $\lambda$ and $t$ integrals on the right hand side of the above equation
to get the right hand side of equation (\ref{eq:New309}).
On the other hand using the fact that $\norm{(A+\epsilon+\lambda I)^{-1}}<2C$ for $0<\epsilon<\frac{1}{2C}$ and $g$ is a probability density, we have
\begin{align*}
&\lim_{\epsilon\downarrow 0} \int g(\lambda) (A+i\epsilon+\lambda I)^{-1} d\lambda
\qquad=\int g(\lambda) \left[ \lim_{\epsilon\downarrow 0}(A+i\epsilon+\lambda I)^{-1}\right] d\lambda\\
&\qquad=\int g(\lambda) (A+\lambda I)^{-1} d\lambda.
\end{align*}
This set of equalities when applied to the left hand side of the equation
(\ref{eq:New309}) gives the Lemma after letting $\epsilon$ go to zero. 
\end{proof}
We give the Lemma below which is a consequence of proofs of results in
Stollmann \cite{MR2654125} and Combes-Hislop-Klopp \cite{MR2042528}.  These papers essentially
prove the result, but we write it here since it does not occur in the form
we need to use.  
\begin{lemma}\label{specavg}
Suppose $A$ is a self-adjoint operator on a separable Hilbert space $\Hi$
and suppose $B$ is a non-negative bounded operator. Consider the operators
$A(t) = A + t B, ~~ t \in \RR$, $ \phi \in Range(B)$ and $\nu^\phi_{A(t)}$
the spectral measure of $A(t)$ associated with the vector $\phi$. Suppose
$\mu$ is a finite absolutely continuous measure with bounded density, then
\begin{equation}
\sup_{z \in \CC^+}  \int \Im ( \langle \phi, (A(t) - z)^{-1}\phi \rangle ) d\mu(t) < \infty.
\end{equation}   
In particular the measure $\int \nu^\phi_{A(t)} ~ dt$ has bounded density.
\end{lemma}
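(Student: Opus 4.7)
The plan is to reduce the lemma to the Birman-Solomyak spectral averaging inequality, which controls the averaged spectral measure, and then obtain the Stieltjes transform bound via the Poisson kernel. Since $\phi \in \mathrm{Range}(B)$, we may write $\phi = B\xi = B^{1/2}\eta$ with $\eta = B^{1/2}\xi$, and $\norm{\eta}^2 = \langle \xi, B\xi\rangle \leq \norm{B}\norm{\xi}^2 < \infty$. This factorization is what allows the spectral averaging machinery to apply.

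The core input is the Birman-Solomyak inequality, as formulated in Combes-Hislop-Klopp \cite{MR2042528} and Stollmann \cite{MR2654125}: for every Borel set $\Delta \subset \RR$ and every $\eta \in \Hi$,
$$\int_\RR \langle \eta, B^{1/2} E_{A(t)}(\Delta) B^{1/2}\eta\rangle\, dt \leq |\Delta|\,\norm{\eta}^2.$$
Substituting $\phi = B^{1/2}\eta$ this reads $\int_\RR \nu^\phi_{A(t)}(\Delta)\, dt \leq |\Delta|\norm{\eta}^2$, so that the Lebesgue-averaged measure $\int \nu^\phi_{A(t)}\, dt$ is absolutely continuous with density bounded by $\norm{\eta}^2$. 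Since $d\mu = \rho(t)\, dt$ with $\rho \in L^\infty$, replacing $dt$ by $\rho(t)\,dt$ and bounding $\rho(t) \leq \norm{\rho}_\infty$ gives
$$\int_\RR \nu^\phi_{A(t)}(\Delta)\, d\mu(t) \leq \norm{\rho}_\infty |\Delta|\,\norm{\eta}^2,$$
so the $\mu$-averaged measure $\nu(\cdot) := \int \nu^\phi_{A(t)}(\cdot)\, d\mu(t)$ has density bounded by $\norm{\rho}_\infty\norm{\eta}^2$.

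For the resolvent bound, for $z = E + i\epsilon \in \CC^+$ use the Poisson kernel representation
$$\Im \langle \phi, (A(t) - z)^{-1}\phi\rangle = \int_\RR \frac{\epsilon}{(\lambda - E)^2 + \epsilon^2}\, d\nu^\phi_{A(t)}(\lambda),$$
and integrate against $\mu$; Fubini (justified by non-negativity of the integrand) gives
\begin{align*}
\int \Im \langle \phi, (A(t) - z)^{-1}\phi\rangle\, d\mu(t) &= \int_\RR \frac{\epsilon}{(\lambda-E)^2+\epsilon^2}\, d\nu(\lambda) \\
&\leq \norm{\rho}_\infty\norm{\eta}^2 \int_\RR \frac{\epsilon}{(\lambda-E)^2+\epsilon^2}\, d\lambda = \pi\norm{\rho}_\infty\norm{\eta}^2,
\end{align*}
which is uniform in $z \in \CC^+$ and yields both conclusions of the lemma.

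The main obstacle is the Birman-Solomyak estimate itself, which must hold for a general bounded non-negative $B$ rather than a finite-rank or rank-one perturbation. This is the content of \cite{MR2042528, MR2654125} and is typically proved by first reducing to the rank-one case via spectral decomposition of $B$, where the Aronszajn-Donoghue formula $F(t,z) = F(0,z)/(1 + tF(0,z))$ yields the bound by direct computation, and then passing to general $B$ by a monotone limit. Since those references are available, we simply quote this input rather than reprove it.
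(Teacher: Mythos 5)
Your proof is correct and essentially follows the same strategy as the paper: reduce the Stieltjes-transform bound to boundedness of the Lebesgue- (and hence $\mu$-) averaged spectral measure, with the latter coming from a spectral averaging estimate. The paper invokes Theorem 3.3 of Stollmann \cite{MR2654125} in its modulus-of-continuity form, $s(\tilde\nu,\epsilon)\leq 6\norm{B}\norm{\phi}\,s(\mu,\epsilon)$, and then appeals to the maximum principle for the positive harmonic function $F$ together with the a.e.\ existence of boundary values equal to the density; you instead quote the Birman--Solomyak form $\int\langle\eta,B^{1/2}E_{A(t)}(\Delta)B^{1/2}\eta\rangle\,dt\leq|\Delta|\norm{\eta}^2$ and bound the Poisson integral directly, getting the explicit constant $\pi\norm{\rho}_\infty\norm{\eta}^2$. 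These are two parametrizations of the same input (both are the Combes--Hislop--Klopp/Stollmann spectral averaging bound), and your Poisson-kernel step is a somewhat cleaner, more self-contained way to close the argument than the maximum-principle invocation. One small remark: because $\phi\in\mathrm{Range}(B)$, any $\xi$ with $B\xi=\phi$ may be used, and the resulting constant $\norm{\eta}^2=\langle\xi,B\xi\rangle$ depends on that choice; this is harmless here since the lemma only asserts finiteness, but note that your constant is governed by $\norm{B^{-1/2}\phi}$ on $\overline{\mathrm{Ran}\,B}$ rather than by $\norm{\phi}$ as in the paper's statement of Stollmann's bound.
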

\begin{proof}
We set $\tilde{\nu} = \int \nu^\phi_{A(t)} ~ dt$, then $\tilde{\nu}$
is a positive finite measure.  We recall that the modulus of continuity of a 
measure $\nu$ is defined as
$$
s(\nu, \epsilon) = \sup \{ \nu([a, a+\epsilon]) : a \in \RR\}.
$$ 
This definition immediately implies that an absolutely continuous measure
$\mu$ with bounded density $\rho$, satisfies $s(\mu, \epsilon) \leq \|\rho\|_\infty  \epsilon$.
Therefore the Theorem 3.3 of Stollman \cite{MR2654125}, implies 
that 
$$
s(\tilde{\nu}, \epsilon) \leq 6\|B\|\|\phi\| s(\mu, \epsilon) \leq C\|\rho\|_\infty  \epsilon.
$$
This inequality implies that the density of 
$\tilde{\nu}$ is bounded.  Since the function 
$$
F(z) = \int \Im ( \langle \phi, (A(t) - z)^{-1}\phi \rangle ) ~ d\mu(t)
 = \int \Im (\frac{1}{x - z}) ~ d\tilde{\nu}
$$
is positive Harmonic in $\CC^+$, by the maximum principle its supremum is
attained on $\RR$. The boundary values of $F$ on $\RR$ exist 
and equal the density of the measure
$\tilde{\nu} = \int \nu^\phi_{A(t)} ~ dt$ Lebesgue almost everywhere
, by Theorem 1.4.16 of Demuth-Krishna \cite{MR2159561},  giving the result.
\end{proof}

\begin{lemma}\label{lem:traceclass}
Consider the operators $H^\omega$, $H_\Lambda^\omega$ given in 
equation (\ref{eq:ranOpCont}) and the following discussion.
Then for any finite $E \in \RR$, the operators
$u_0 E_{H^\omega_\Lambda}((-\infty, E))$,  $u_0 E_{H^\omega}((-\infty, E))$ 
are trace class for all $\omega$.  The traces of these operators
are uniformly bounded in $\omega$ for fixed $E$.
\end{lemma}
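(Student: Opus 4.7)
The plan is to factor the spectral projection through a high power of the resolvent, reducing the problem to showing that $u_0 (H^\omega + c_0)^{-p}$ is trace class with trace norm uniform in $\omega$. Since the vector potential makes $H_0 \geq 0$, and since the random potential $V^\omega = \lambda \sum_n \omega_n u_n$ is uniformly bounded in $\omega$ (because the $u_n$ have bounded overlap by the partition of unity condition and $\omega_n$ lies in the compact support of $\rho_n$), there exists $c_0 > 0$ independent of $\omega$ such that $H^\omega + c_0 \geq I$ and $H_\Lambda^\omega + c_0 \geq I$. For any $p \in \NN$ we may then write
\begin{equation*}
u_0 E_{H^\omega}((-\infty,E]) = \bigl[u_0 (H^\omega + c_0)^{-p}\bigr] \cdot \bigl[(H^\omega + c_0)^p E_{H^\omega}((-\infty,E])\bigr],
\end{equation*}
and observe that the second factor has operator norm at most $(E+c_0)^p$ by the spectral theorem. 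Thus it suffices to prove the first factor is trace class with trace norm bounded uniformly in $\omega$, for some (any) $p$ large enough.

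First I would establish the base case: $u_0 (H_0 + c_0)^{-p}$ is trace class for $p$ sufficiently large. Because $u_0$ is smooth and compactly supported, $u_0 \in L^q(\RR^d)$ for every $q \in [1,\infty]$. Using the diamagnetic inequality of Kato/Simon, $|e^{-tH_0}(x,y)| \leq e^{-t(-\Delta)}(x-y)$, whose right-hand side has an explicit Gaussian form; integrating in $t$ against $t^{p-1}/\Gamma(p)$ shows that the integral kernel of $(H_0+c_0)^{-p}$ is pointwise bounded by the kernel of $(-\Delta+c_0)^{-p}$, which is in $L^2(\RR^d)$ once $p > d/4$. Writing $u_0(H_0+c_0)^{-p} = [u_0(H_0+c_0)^{-p/2}] \cdot [(H_0+c_0)^{-p/2}]$, a Hilbert--Schmidt computation using $u_0 \in L^2$ and compact support gives that each factor is Hilbert--Schmidt for $p > d/2$, so the product is trace class.

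To transfer from $H_0$ to $H^\omega$, I would iterate the resolvent identity
\begin{equation*}
(H^\omega + c_0)^{-1} = (H_0+c_0)^{-1} - (H_0+c_0)^{-1} V^\omega (H^\omega+c_0)^{-1}
\end{equation*}
$k$ times inside the expression $u_0(H^\omega+c_0)^{-p}$ until every resulting summand contains a factor of the form $u_0 (H_0+c_0)^{-q}$ with $q > d/2$ on the extreme left; such a factor is trace class by the previous paragraph, and the remaining operator factors are bounded uniformly in $\omega$ by the uniform bounds on $\|V^\omega\|$ and $\|(H^\omega+c_0)^{-1}\|$. This yields the trace class property with trace norm bounded by a constant depending only on $E$, $d$, $\lambda$, $c_0$ and the compact support of $\rho_n$, hence uniform in $\omega$. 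For the restricted operator $H_\Lambda^\omega$ the argument is strictly easier, since $H_{0,\Lambda}$ on the bounded region $[\Lambda]$ with Dirichlet conditions already has compact resolvent in every Schatten ideal; in particular $(H_\Lambda^\omega + c_0)^{-p}$ is itself trace class for $p$ large, with norm bounded uniformly in $\omega$.

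The main obstacle will be the base case $u_0 (H_0+c_0)^{-p} \in \mathcal{I}_1$ in the magnetic setting: the diamagnetic bound only controls moduli of kernels, so Hilbert--Schmidt estimates survive but a direct trace class estimate does not, and one has to split the exponent as above. A secondary but routine bookkeeping issue is choosing $p$ so that, after iterating the resolvent identity the required number of times, the leftmost factor in every summand retains enough regularizing power. All constants arising depend only on $\sup_n \|\omega_n\|_\infty$, which is finite by the compact support hypothesis on $\rho_n$, so the uniformity in $\omega$ of the trace norm for each fixed $E$ follows automatically.
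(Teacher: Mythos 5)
Your plan shares the broad strategy of the paper's proof --- factor the spectral projection as a trace class piece times a bounded piece --- but you route through $H^\omega$ first and then try to transfer back to $H_0$, and two of the steps along that route do not hold up.

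First, the Hilbert--Schmidt splitting for the base case is incorrect as written. In the factorization $u_0(H_0+c_0)^{-p}=[u_0(H_0+c_0)^{-p/2}]\cdot[(H_0+c_0)^{-p/2}]$, the first factor is indeed Hilbert--Schmidt once $p>d/2$ (the compact support of $u_0$ supplies the $L^2$-integrability in one variable), but the second factor $(H_0+c_0)^{-p/2}$ is \emph{not} Hilbert--Schmidt on $L^2(\RR^d)$: its kernel is, up to the diamagnetic bound, a nice function of $x-y$ but the double integral $\iint |K(x,y)|^2\,dx\,dy$ diverges because there is no localization in the second variable. So the product argument gives only $\mathcal{I}_2$, not $\mathcal{I}_1$. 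To get trace class you need a genuine $f(X)g(P)$-type criterion (e.g.\ Simon, \emph{Trace Ideals}, Chapter 4), which is exactly what the paper invokes when asserting that $u_0(H_0+a)^{-d}$ is trace class. Second, the resolvent iteration step has a real gap: substituting $(H^\omega+c_0)^{-1}=(H_0+c_0)^{-1}-(H_0+c_0)^{-1}V^\omega(H^\omega+c_0)^{-1}$ into $u_0(H^\omega+c_0)^{-p}$ and expanding produces terms in which the leftmost free-resolvent block is just $u_0(H_0+c_0)^{-1}$, followed by a product of merely bounded operators $V^\omega(H^\omega+c_0)^{-1}$; no amount of further iteration raises the power $q$ on the left in those terms, so the claim that ``every resulting summand contains a factor $u_0(H_0+c_0)^{-q}$ with $q>d/2$ on the extreme left'' is not achievable, and the $\mathcal{I}_q$ regularity of the lone left factor (with $q>d/2>1$) is not enough to conclude $\mathcal{I}_1$.

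The paper avoids both issues by factoring directly through the \emph{free} resolvent: $u_0 E_{H^\omega}((-\infty,E])=[u_0(H_0+a)^{-d}]\cdot[(H_0+a)^{d}E_{H^\omega}((-\infty,E])]$. The first bracket is trace class by the cited criterion, and the second bracket is bounded because $H_0$ and $H^\omega$ differ by a bounded (smooth) multiplication operator, so they are mutually relatively bounded with comparable graph norms on the domains of powers, and $(H^\omega+a)^{d}E_{H^\omega}((-\infty,E])$ is bounded by the spectral theorem. This sidesteps the need to iterate resolvents altogether and makes the uniformity in $\omega$ immediate, since $\|V^\omega\|_\infty$ is bounded uniformly. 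If you want to salvage your route through $(H^\omega+c_0)^{-p}$, the cleaner fix is to apply the same $f(X)g(P)$ trace criterion (via the diamagnetic inequality, for $c_0>\|V^\omega\|_\infty$) directly to $u_0(H^\omega+c_0)^{-p}$, rather than iterating back to $H_0$.
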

\begin{proof}
We will give the proof for $H^\omega$, the proof for the others is similar.
The hypotheses on  $H^\omega$ imply that it is  bounded below and 
the pair $H_0, H^\omega$ are relatively
bounded with respect to each other, being bounded perturbations of each
other, the operators $(H_0+a)^d E_{H^\omega}((-\infty, E))$ are bounded
for any fixed $(E, a, \omega)$.  So taking $a$ in the resolvent
set of $H_0$ and using the fact that $u_0(H_0+a)^{-d} $ is trace class 
we see that 
$$u_0 E_{H^\omega}(-\infty, E)=u_0 (H_0+a)^{-d}(H_0+a)^{d} E_{H^\omega}(-\infty, E),$$
is a  product of a trace class operator and a bounded operators 
for each fixed $(\omega, a, E)$ with $a$ positive and large.  Therefore 
$u_0 E_{H^\omega}(-\infty, E)$ is  also trace class for each $E, \omega$.
The uniform boundedness statement is obvious from the
assumptions on the random potential.
\end{proof}

\bibliographystyle{plain}

\end{document}